\begin{document}

\newtheorem{theorem}{Theorem}[section]
\newtheorem{proposition}{Proposition}[section]
\newtheorem{corollary}{Corollary}[section]
\newtheorem{lemma}{Lemma}[section]
\newtheorem{definition}{Definition}[section]
\newtheorem{assumption}{Assumption}
\newcommand{\ams}{\textit{Ann. Math. Statist.}}
\newcommand{\dfn}{\stackrel{\triangle}{=}}
\newcommand{\argmax}{\operatornamewithlimits{arg\,max}}
\newcommand{\argmin}{\operatornamewithlimits{arg\,min}}
\newcommand{\argsup}{\operatornamewithlimits{arg\,sup}}
\newcommand{\arginf}{\operatornamewithlimits{arg\,inf}}

\renewcommand{\thetheorem}{\arabic{section}.\arabic{theorem}}
\renewcommand{\thelemma}{\arabic{section}.\arabic{lemma}}
\renewcommand{\thedefinition}{\arabic{section}.\arabic{definition}}
\renewcommand{\theproposition}{\arabic{section}.\arabic{proposition}}
\renewcommand{\thecorollary}{\arabic{section}.\arabic{corollary}}

%\newcounter{myequation}[section]
%\renewcommand{\theequation}{\arabic{section}.\arabic{myequation}}

\title{Asymptotic Optimality Theory For Decentralized Sequential Multihypothesis Testing Problems}

\author{Yan Wang,
Yajun Mei
\thanks{This work was supported in part by the AFOSR grant FA9550-08-1-0376 and the NSF Grant CCF-0830472. The material in this paper was presented in part at the IEEE International Symposium on Information Theory, Austin, TA, June 2010.}

\thanks{Y. Wang and Y. Mei are with the School of Industrial and System Engineering, Georgia Institute of Technology, Atlanta, GA, 30332 USA (e-mail: ywang67@isye.gatech.edu, ymei@isye.gatech.edu).}}

\maketitle

\begin{abstract}
\boldmath
\bf The Bayesian formulation of sequentially testing $M \ge 3$ hypotheses is studied in the context of a decentralized sensor network system. In such a system, local sensors observe raw observations and send quantized  sensor messages to a fusion center which makes a final decision when stopping taking observations. Asymptotically optimal decentralized sequential tests are developed from a class of ``two-stage" tests that allows the sensor network system to make a preliminary decision in the first stage and then optimize each local sensor quantizer accordingly in the second stage. It is shown that the optimal local quantizer at each local sensor in the second stage can be defined as a maximin quantizer which turns out to be a randomization of at most $M-1$ unambiguous likelihood quantizers (ULQ). We first present in detail our results for the system with a single sensor and binary sensor messages, and then extend to more general cases involving any finite alphabet sensor messages, multiple sensors, or composite hypotheses.
\end{abstract}
\begin{IEEEkeywords}
Asymptotic optimality, maximin quantizer, multihypotheses testing, sequential detection, two-stage tests, unambiguous likelihood quantizer(ULQ).
\end{IEEEkeywords}

\IEEEpeerreviewmaketitle

\section{Introduction}\label{sec:intro}

Sequential detection or sequential hypothesis testing has many important real-world applications such as target detection in multiple-resolution radar (Marcus and Swerling \cite{ms}), serial acquisition of direct-sequence spread spectrum signals (Simon et al. \cite{sosl}) and statistical pattern recognition (Fu \cite{fu}).
The centralized version, in which all observations are available at a single central location, has been well studied.
For example, when testing $M=2$ hypotheses, a well-known optimal centralized test is the sequential probability ratio test (SPRT) developed
by Wald \cite{wald47}, also see Wald and Wolfowitz \cite{ww}. When testing $M\ge 3$ hypotheses, i.e., in the sequential multihypothesis testing problem, there is no tractable closed-form expression for the optimal centralized sequential tests, although various asymptotically optimal sequential tests have been proposed and investigated
in the literature, see, for example, Kiefer and Sacks \cite{ks}, Lorden \cite{lord}, Draglin, Tartakovsky and Veeravalli \cite{dtv, dtv2}.

In recent years, the decentralized version of sequential hypothesis testing problems has gained a great amount of attention and has been applied into a wide range of applications such as military surveillance (Tenney and Sandell \cite{ts}), target tracking and classification (Li et al. \cite{lwhs}), and data filtering (Ye et al. \cite{yllz}). Under a widely used decentralized setting, raw data are observed at a set of geographically deployed sensors, whereas the final decision is made at a central location, often called the fusion center. The key feature here is that raw observations at the local sensors are generally not directly accessible by the fusion center, and the local sensors need to send quantized summary messages (generally belonging to a finite alphabet set) to the fusion center. This is due to limited communication bandwidth and requirements of high communication robustness.

Unfortunately, decentralized sequential hypothesis testing problems are very challenging, and to the best of our knowledge, existing research is restricted to testing two simple hypotheses, for example, see Veeravalli \cite{vee}, Veeravalli, Basar and Poor \cite{vbp},  Nguyen, Wainwright and Jordan \cite{nw}, and Mei \cite{mei}. It has been an open problem to find any sort of asymptotically optimal solutions for the decentralized sequential testing problem when testing $M \ge 3$ hypotheses. This is not surprising, because  even in the centralized version, it requires sophisticated mathematical and statistical techniques and only asymptotic optimality results are available.

The primary goal of this paper is to develop a class of asymptotically optimal decentralized sequential procedures for testing $M \ge 3$ hypotheses. To do so, a major challenge we need to overcome is finding the ``optimal quantizers" that can best send quantized summary sensor messages from the local sensors to the fusion center so as to lose as little information as possible. Intuitively, such a quantizer should depend on the true distribution of the raw data, which is unknown, and thus stationary quantizers are generally not optimal. In addition, since a quantizer can be any measurable function as long as its range is in the given finite alphabet set, it resides in an infinite dimensional functional space. Hence it is essential to investigate the form of  the ``optimal quantizers" so that one can reduce the infinite dimensional functional space to a finite-dimensional parameter space for the purpose of theoretical analysis and numerical computation. Note that when testing $M=2$ hypotheses, Tsitsiklis \cite{tsi} and Veeravalli et al.  \cite{vbp} showed that the optimal quantizers can be found from the family of monotone likelihood ratio quantizers (MLRQ), whose form is defined up to a finite number of parameters. Unfortunately, such a result does not apply to the case of testing $M \ge 3$ hypotheses. To find the form of the optimal quantizers for multi-hypotheses, we propose to
combine three existing methodologies together: two-stage tests in Stein \cite{stein} and  Kiefer and Sacks \cite{ks} (or equivalently, tandem quantizers in Mei \cite{mei}), unambiguous likelihood quantizers (ULQ) in Tsitsiklis \cite{tsi}, and randomized quantizers (see Chernoff \cite{che} for a closely related topic on randomized experiments).

%In particular, we define the ``maximin quantizers'' which asymptotically maximize the performance of the test if applied appropriately. The maximin quantizers are also tractable from a numerical computation point of view because they involve only finite number of parameters.
%Specifically, a two-stage test procedure enables one to first make a preliminary decision about the most promising state of nature and then optimize the quantizer based on it. A carefully randomized quantizer allows the test to defend a promising state simultaneously against all other $M-1$ states. The maximin quantizers, by maximize certain information numbers, are proved to be the optimal quantizers for a two-stage test once a preliminary decision is made. In fact, Mei \cite{mei}'s ``tandem quantizers'' can be viewed as a special case of two-stage test in testing binary simple hypotheses. The difference is that when testing $M=2$ hypotheses, randomized quantizers do not need to involve and a best quantizer can always be chosen as an MLRQ, while the two-stage test is much more powerful and is able to deal with the much more complicated multihypothesis testing. For our test, feedback can be designed very efficiently,  in some case it only needs a one-shot feedback of $\log_2 M$ bits.

The remainder of the paper is organized as follows. Section \ref{sec:pbfm} gives a rigorous formulation of decentralized sequential multihypothesis testing  problems under a Bayesian framework. Section \ref{sec:2stage} provides a general definition of two-stage tests and discusses their implementation issues, especially those of the randomized quantizers. To highlight our main ideas, Section \ref{sec:main} states our main results for a simplified sensor network system with a single sensor and binary sensor messages: Subsection \ref{subs:rdqntzInfoNmbr} develops asymptotically optimal decentralized sequential tests by considering two-stage tests when the local quantizers are the proposed ``maximin quantizers,'' and Subsection \ref{subs:SearchMaximinQntzr} characterizes maximin quantizers and discusses their numerical computation issues. Section \ref{sec:ext} extends our main results to three more general cases: (A) systems with finite alphabet sensor messages; (B) systems with conditionally independent multiple sensors; and (C) testing composite hypotheses. Numerical simulation results  are presented in Section \ref{sec:Exp}, and concluding remarks are included in Section \ref{sec:conclusion}. The  technical details are provided in the appendices.

\section{Notation and Problem Formulation}\label{sec:pbfm}

\begin{figure}
\centering
\includegraphics[width=3in]{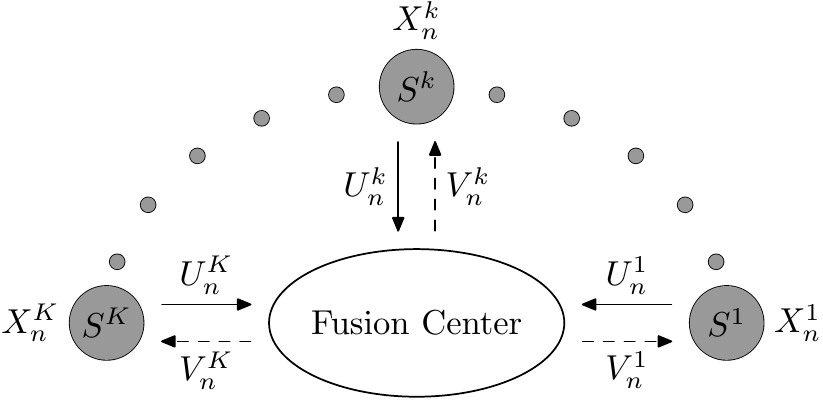}
% where an .eps filename suffix will be assumed under latex,
% and a .pdf suffix will be assumed for pdflatex; or what has been declared
% via \DeclareGraphicsExtensions.
\caption{A widely used configuration of sensor network}
\label{fig:snsrnet}
\end{figure}

As illustrated in Fig.\ref{fig:snsrnet}, in a widely used configuration, a sensor network consists of  $K$ local sensors labeled by $S^1$, \dots, $S^K$ and a fusion center which makes a final decision when stopping taking observations.
%In this paper, when we need to identify a particular local sensor, we always use a superscript.
At each time step $n=1,2,\ldots,$ each local sensor $S^k$ observes raw data $\{X^k_n\}$ and sends quantized summary messages $\{U^k_n\}$ to the fusion center. Here the quantized messages $\{U^k_n\}$ are required to belong to a finite alphabet, say, $\{0, 1, \dots, l^k-1\},$ due to limited communication bandwidth or requirements of high communication robustness. In other words, the fusion center does not have direct access to the raw data $\{X^k_n\}$, and have to utilize the quantized sensor messages $\{U^k_n\}$ to make a final decision. If necessary, the fusion center can send feedback $\{V_n^k\}$ to the local sensors so as to improve the system efficiency.

To be more rigorous, we need to further specify the form of the sensor message functions. In this paper, we focus on \textit{systems with full feedback, but local memories restricted to past decisions}, e.g., \textit{Case E} of Veeravalli et al. \cite{vbp}. Mathematically, at time $n,$ for each $k=1,2,\ldots, K,$ the quantized sensor message at the $k$th local sensor is assumed to be of the form
\begin{equation}\label{equ:UinPhiinXinVinminus1}
U^k_n=\phi^k_n(X^k_n; V^{k}_{n-1}) \ \in\{0,1,\dots, l^k-1\}
\end{equation}
where the feedback $V^k_{n-1}$ is defined by
\begin{equation}\label{equ:VinpsiinUV}
  V^k_{n-1}=\psi^k_n(U^{1}_{[1,n-1]},\dots,U^{K}_{[1,n-1]})
\end{equation}
and $U^k_{[1,n-1]}=(U^k_1,\dots, U^k_{n-1})$ denotes all past local sensor messages.  That is, the quantizer $\phi_n^k$ is a function used by sensor $S^k$ to map the local raw data $X^k_n$ into  $\{0,1,\dots, l^k-1\}$, and the choice of $\phi_n^k$ can depend on the feedback $V^k_{n-1}$ and can be a randomized function (to be discussed later).

In decentralized sequential multihypothesis testing problems, there are $M$ hypotheses regarding the distribution $\textbf{P}$ of the raw data $\{X^k_n\}$:
\begin{equation} \label{equ:Mhypo}
  \textbf{H}_m: \quad \textbf{P}=\textbf{P}_m, \quad m=0,1,\dots,M-1.
\end{equation}
Under each $\textbf{P}_m,$ the raw data $X^k_n$ at local sensor $S^{k}$ are i.i.d. with density $f_m^k(\cdot)$ with respect to a common underlying measure, and the raw data $\{X^k_n\}$ are assumed to be independent among different sensors. Hence the distributions of the raw data under $\textbf{P}_m$ are  completely determined by the $K$ densities: $f_m^1$,\dots, $f_m^K.$ Below we simply state that the true state of nature is $m$ or $\textbf{P}_m$ if the hypothesis $\textbf{H}_m$ is true.

A decentralized sequential test $\delta$ consists of a rule to determine the sensor messages, a stopping time $N$ used by the fusion center and a final decision rule $D \in\{0, 1,\dots, M-1\}$ that chooses one of the $M$ probability measures $\textbf{P}_{m}$'s based on the information up to time $N$ at the fusion center. As in Wald \cite{wald47}, Veeravalli et al. \cite{vbp}, and Veeravalli \cite{vee}, let $c>0$ be the cost per time step until stopping, and let $W(m,m')$ be the loss of making decision $D=m'$ when the true state is $\textbf{P}_m$. It is standard to assume that $W(m,m)=0$ but $W(m,m')>0$ for any $m\neq m'$, i.e., no loss occurs if and only if a correct decision is made. Then when the true state of nature is $\textbf{P}_{m},$ the total expected cost of a decentralized test $\delta$ is
\begin{displaymath}
  \mathcal{R}_c(\delta; m)=c\textbf{E}_m (N) +\sum_{m'}W(m,m')\textbf{P}_m\{D=m'\}
\end{displaymath}
where $\textbf{E}_m$ is the expectation operator under $\textbf{P}_m$. In a Bayesian formulation, we assign prior probabilities $\pi=(\pi_{0},\dots,\pi_{M-1})$ to the $M$ hypotheses $\textbf{H}_0,\cdots,\textbf{H}_{M-1}.$ Hence, the Bayes risk of the decentralized test $\delta$ is
\begin{equation} \label{equ:BysRskSeqTest}
  \mathcal{R}_c(\delta)=\sum \pi_m \mathcal{R}_c(\delta; m).
\end{equation}
The Bayes formulation of the decentralized sequential multihypothesis testing problem can then be stated as follows. \smallskip

\noindent \textit{Problem (P1):} Minimize the $\mathcal{R}_c(\delta)$ in (\ref{equ:BysRskSeqTest}) among all possible decentralized sequential multihypothesis test procedures $\delta$.\smallskip

Denote by $\delta_B^*(c)$ a Bayes solution to (\textit{P1}). In Veeravalli et al. \cite{vbp}, $\delta_B^*(c)$ is obtained through dynamic programming for the simplest case of testing binary hypotheses, i.e., $M=2$. Unfortunately, in a general multihypothesis setting, when $M\ge 3$, it is impossible to find $\delta_B^*(c),$ since the problem is intractable even for the centralized version, see, for example, Dragalin, Tartakovsky and Veeravalli \cite{dtv}. This prompts us to adopt the following asymptotic optimization approach in which the cost $c$ per time step goes to $0$. \smallskip

\noindent\textit{Problem (P2)}: Find a family of decentralized sequential multihypothesis testing procedures $\{\delta_A(c)\}$ that is {\it asymptotically optimal} in the sense that
\begin{equation}\label{equ:AsmptOptmlTest}
\lim_{c\to 0}\mathcal{R}_c(\delta_B^*(c))/\mathcal{R}_c(\delta_A(c))=1.
\end{equation}

Now let us discuss the concepts of quantizers and their Kullback-Leibler (K-L) divergences, both of which will be essential in our asymptotic optimality theory. A quantizer is either a deterministic measurable function or a randomization of some (possibly infinitely many) deterministic  measurable functions that
maps the raw data into a finite alphabet set, e.g., the function $\phi_n^k$ in (\ref{equ:UinPhiinXinVinminus1}) is a quantizer. The quantizer is called a deterministic quantizer if the corresponding measurable function is deterministic. At a given local sensor $S$ (here and below we miss the superscript $k$ for simplicity), denote by $\Phi$ the set of all possible local deterministic quantizers $\phi$'s  and let $f_m(\cdot; \phi)$ be the induced probability mass function of quantized message $U_{n} = \phi(X_n)$ when the raw observation $X_{n}$ is distributed according to $f_m(\cdot)$ under $\textbf{P}_m,$ i.e.,
\begin{equation} \label{equ:qntzrdistri}
  f_{m}(u;{\phi})=\textbf{P}_m({\phi}(X_n)=u), \quad \mbox{ for } u=0,1,\dots, l-1.
\end{equation}
For the deterministic quantizer $\phi,$ it is easy to see that its K-L divergences are defined by
\begin{equation} \label{equ:infodeterm}
I(m,m'; \phi)=\sum_{u=0}^{l-1}f_{m}(u; \phi)\log \frac{f_{m}(u; \phi)}{f_{m'}(u; \phi)}
\end{equation}
for all $m \ne m'.$ However, we need to be very careful when defining the K-L divergences of a randomized quantizer $\bar{\phi}=\sum p^j \phi^j$ that assigns probability masses $\{p^j\}$ onto some countable subset of deterministic quantizers $\{\phi^j\}\subset\Phi$.
On the one hand, one can  directly substituting the $\phi$ in (\ref{equ:infodeterm}) by $\bar{\phi}$, i.e.,
\begin{equation} \label{equ:inforndma}
 \tilde I(m,m'; \bar{\phi})=\sum_{u=0}^{l-1}f_{m}(u;\bar{\phi})\log \frac{f_{m}(u;\bar{\phi})}{f_{m'}(u;\bar{\phi})}
\end{equation}
where
\begin{equation*}
  f_{m}(u; \bar{\phi})=\textbf{P}_m(\bar{\phi}(X)=u), \quad u=0,1,\dots, l-1.
\end{equation*}
This type of the K-L divergence has been defined  for randomized quantizers in the engineering literature, e.g., Tsitsiklis \cite{tsi}. On the
other hand, one can also define the K-L divergence of the randomized quantizer $\bar \phi$ by the weighted average of those of the deterministic quantizers it randomizes:
\begin{equation} \label{equ:inforndm}
I(m,m'; {\bar{\phi}})=\sum p^j I(m,m';{\phi^j}), \quad 0\le m\neq m'\le M-1.
\end{equation}
By Jensen's inequality, we have  $ \tilde I(m,m'; \bar{\phi}) \le I(m,m'; {\bar{\phi}}),$ i.e., the K-L divergence defined in (\ref{equ:inforndma}) is dominated by that in (\ref{equ:inforndm}), also see Appendix \ref{app:qntzr} for more discussions.

To the best of our knowledge,  the K-L divergence in (\ref{equ:inforndm}) has not been studied in the literature so far, and it turns out that it will play a central role in our asymptotic theory. The reason why our asymptotic theory involves the K-L divergence in (\ref{equ:inforndm}) instead of that in (\ref{equ:inforndma}) is due to our novel way of implementing randomized quantizers to minimize loss of information. Roughly speaking, when implementing randomized quantizers, it is essential for the fusion center to know  which specific deterministic quantizer is going to be used at the local sensor at each time step,  since otherwise the fusion center can be confused by randomized quantizers  and its decision making will be less efficient. This issue will be discussed further in Subsection \ref{subs:rndmqntzr}. Also note that a deterministic quantizer can also be thought as a randomized quantizer that assigns probability one to itself. Denote by $\bar{\Phi}$ the set of all possible quantizers at the local sensor $S$, deterministic or randomized.

Throughout our paper we make the following standard assumption to ensure the finiteness of the expectation of the raw data's log-likelihood ratios.
\begin{assumption}\label{ass:KLInfoLimit}
For any two different states $0\le m\neq m'\le M-1$ and local sensor $S^k$,
\[0<\textbf{E}_m\left[\log\frac{f^k_m({X_n^k})}{f^k_{m'}({X_n^k})}\right]<\infty.\]
\end{assumption}
In the literature, researchers often assume a uniform bound on the second moments of the log-likelihood ratio $\log\frac{f_m^k(u^k; \phi)}{f_{m'}^k(u^k; \phi)}$ under $\textbf{P}_m$. See, for example, Kiefer and Sacks \cite{ks} and Mei \cite{mei}. Here our assumption is much weaker, and it turns out that it will be sufficient for the first-order asymptotic optimality under our setting.

\section{Two-Stage Test Procedures} \label{sec:2stage}

In this section, we introduce a class of ``two-stage'' decentralized sequential tests in which each local sensor uses two stationary (possibly randomized) local quantizers with at most one switch between these two quantizers. This type of tests are useful because they allows the fusion center to first make a preliminary guess about the true state of nature and then optimize the procedure accordingly.

To highlight our main ideas, in the present and next sections we assume that the sensor network system consists of a single local sensor, i.e., $K=1$ and all quantized messages are binary, i.e., $U_n\in\{0,1\}$. Extensions to general cases are presented in Section \ref{sec:ext}. To save notations, we drop all the superscripts denoting the sensors. That is, in this and next sections we assume that one observes raw data $X_{1}, X_{2}, \cdots,$ which are i.i.d. with density $f_{m}(x)$ under the hypothesis $\textbf{H}_m.$ The final decision is based on quantized messages $U_{n} = \phi_{n}(X_{n}; V_{n-1}) \in \{0,1\}$ with the feedback $V_{n-1} = \psi_{n-1}(U_{1}, \cdots, U_{n-1}).$ For a given (randomized) quantizer $\phi,$ the K-L divergence of $\textbf{P}_{m'}$ from $\textbf{P}_m$ is $I(m,m'; \phi)$  defined in (\ref{equ:inforndm}).

\subsection{Our Proposed Test} \label{subs:2stage}

 Our proposed two-stage test $\delta(c)$ can be defined as follows. In the \textit{first stage} of $\delta(c)$, the local sensor can use any ``reasonable'' stationary deterministic quantizer and the fusion center needs to make a preliminary guess about the true state of nature.  The only requirement is that as the cost $c\to 0,$ the probabilities of making incorrect preliminary guess go to zero but the time steps taken at this first stage become negligible as compared to those of the overall procedure (or the second stage).

To be more concrete, let $u(c)\in (0,1/2)$ be a function of $c$ such that $u(c)\to 0$ and $\log u(c)/\log c \to 0$ when $c\to 0$, e.g., $u(c)=1/|\log c|$. Choose a deterministic quantizer $\phi^0$ such that $I(m,m';{\phi^0})>0$ for any two states $0\le m\neq m'\le M-1$, and let the local sensor  use the stationary quantizer $\phi^0$ to send i.i.d. sensor messages $U_n=\phi^0(X_n)$ to the fusion center. Then the fusion center faces a classical sequential detection problem with the  i.i.d. sensor messages $U_n$'s as inputs, and thus it is intuitively appealing to make a preliminary decision based on posterior distributions. Specifically, at each time step $n=0, 1, \cdots,$ the fusion center updates recursively the posterior distribution $(\pi_{0,n}, \pi_{1,n},\dots, \pi_{M-1, n})$ as follows:
\begin{equation} \label{equ:postupdate}
  \pi_{m,n}=\frac{\pi_{m,n-1}f_{m}(U_n; \phi^0)}{\sum_{0\le m'\le M-1}\pi_{m',n-1}f_{m'}(U_n;\phi^0)}.
\end{equation}
Then the fusion center will stop the first stage at time step
\begin{equation*}\label{equ:EndOfStage1}
N_0=\min\{n \ge 0: \max_{0\le m\le M-1}\{\pi_{m,n}\}\ge 1-u(c)\}
\end{equation*}
and when stopped, the fusion center  makes a preliminary decision
\begin{equation*}\label{equ:dfnd0predcsn}
D_0=\argmax_{0\le m\le M-1}\pi_{m,N_0}.
\end{equation*}
Note that the preliminary decision $D_0$ is well-defined because the maximum value of $\pi_{m,N_0}$ is attained at only one index $m$  due to the definition of $N_0$ and the fact that $u(c)<1/2.$ For the purpose of practical implementation, the preliminary decision $D_0$ can be transmitted to the local sensor through a feedback of $\log_2 M$ bits.

In the \textit{second stage} of our proposed test $\delta(c),$ the local sensor will switch to another stationary (likely randomized) quantizer  that may depend on  the preliminary decision  $D_0$. Without loss of generality, we assume that the local sensor uses the stationary quantizer $\bar{\phi}_m$ when the preliminary decision at the first stage is $D_0=m$ for $m=0,1,\dots, M-1.$ Here we put a bar over $\bar{\phi}_m$ to emphasize that it is likely a randomized quantizer when optimized, and we will postpone the detailed discussion about how to implement randomized quantizers to the next subsection.

Now at the second stage, the fusion center shall ignore the preliminary decision $D_0$ and {\it continue to update} the posterior distribution $(\pi_{0,n},\dots,\pi_{M-1,n})$ with the sensor messages generated from the new quantizer $\bar{\phi}_{m}$ when $D_0 = m$ (how to update will be discussed in the next subsection).  Then the fusion center will stop the second stage (hence the whole procedure) at time step
\begin{equation}\label{equ:timeToStop2Stagea}
N =\min\{n \ge N_0: \max_{0\le m\le M-1}\{\pi_{m,n}\}\ge 1- c \}
\end{equation}
and when stopped, the fusion center makes a final decision
\begin{equation*}
D =\argmax_{0\le m\le M-1}\pi_{m,N}.
\end{equation*}

From the asymptotic point of view, many other possible decision rules can also be used  at the fusion center. For instance, let $r_{m,n}=\sum_{m'\neq m} \pi_{m',n} W(m',m)$ be the average posterior cost when making a decision $m$ at time $n$, and then the fusion center can stop the second stage at time \begin{equation}\label{equ:timeToStop2Stageb}
N=\min\{N_m: 0\le m\le M-1\}
\end{equation} where
\begin{equation}\label{equ:timeToStop2Stage}
  N_m=\min\{n\ge N_0: r_{m,n}\le c\}, \quad m=0,1,\cdots, M-1.
\end{equation}
Based on our experiences, the stopping time $N$ defined in (\ref{equ:timeToStop2Stageb}) is slightly better than that in (\ref{equ:timeToStop2Stagea}) in finite-sample numerical simulations, especially when the costs $W(m',m)$ are not a simple $0-1$ cost function. Moreover, at the second stage, our proposed test will continue to update the posterior distribution instead of starting afresh as required by the two-stage tests in Section V of Kiefer and Sacks \cite{ks} or in Section IV of Mei \cite{mei}. The main reason is to further utilize information gathered from the first stage so as to improve the efficiency in finite-sample simulations, although it also means extra treatments in asymptotic arguments.

\subsection{Implementing Randomized Quantizers and Updating Posterior Distribution} \label{subs:rndmqntzr}

When testing $M \ge 3$ hypotheses, randomized quantizers are likely needed in the second stage in order to develop the optimal two-stage tests, and thus it is necessary to determine the appropriate approach to implement them as well as how to update posterior distributions at the fusion center, especially at the second stage. Assume a randomized quantizer is given by $\bar{\phi}=\sum p^j \phi^j.$ The key requirements for randomization in our two-stage test is that the fusion center must know which deterministic quantizer is picked to quantize the raw observation, since otherwise the randomization can cause confusion at the fusion center. %We assume that this requirement for randomization is always satisfied in our later discussion except in Appendix \ref{app:lwbds}.
The most straightforward (though practically infeasible) implementation is to let the fusion center do the randomization directly. Specifically, at time step $n$ the fusion center will choose the deterministic quantizer $\phi^j$ with probability $p^j,$ say choosing the deterministic quantizer $\phi^{j(n)}.$ Through a feedback from the fusion center, the local sensor will then use the chosen deterministic quantizer $\phi^{j(n)}$ at time step $n$ to quantize the raw observation. After receiving the quantized sensor message $U_{n}$  at time step $n,$ the fusion center then update the posterior distribution as follows:
\begin{equation}\label{equ:udpostrndmqntzr}
  \pi_{m,n}=\frac{\pi_{m,n-1}f_{m}(U_n; \phi^{j(n)})}{\sum_{m'=0}^{M-1}\pi_{m',n-1}f_{m'}(U_n;\phi^{j(n)})}
\end{equation}
because the fusion center knows that $U_n$ comes from the deterministic quantizer $\phi^{j(n)}$ at time step $n.$

A theoretically equivalent but more feasible implementation in practice is to adopt a ``pseudo-randomization'' at the local level through the so-called ``block design'' (see Section V of Kiefer and Sacks \cite{ks}). To be specific, suppose $\bar{\phi}$ randomizes a finite number (say $i$) of deterministic quantizers, and all $p^j$'s are (or can be approximated by) rational numbers with $b$ a common denominator. Then we divide the time steps into blocks of size $b$, and within each block, the raw data are quantized with deterministic quantizers $\{\phi^1,\dots, \phi^i\}$ following a fixed order such that each $\phi^j$ is used for exactly $p^jb$ times. Under this implementation, the fusion center again knows which deterministic quantizer is used at each time step, and thus can update the posterior distribution as in (\ref{equ:udpostrndmqntzr}).

We would like to point out that our implementation of randomized quantizers is very different from those existing implementations in the literature (see Tsitsiklis \cite{tsi}). In the latter the randomization is done at the local level in the sense that the local sensor randomly picks one of the deterministic quantizer $\phi^j$'s, and the fusion center will only get the quantized message $U_n$ without knowing exactly which deterministic quantizer is used to generate $U_{n}.$ In this case, to update posterior distribution, the fusion center has to plug in $\bar{\phi}$ (instead of $\phi^{j(n)}$) into (\ref{equ:udpostrndmqntzr}), i.e.,
\begin{equation*}
  \pi_{m,n}=\frac{\pi_{m,n-1}f_{m}(U_n; \bar{\phi})}{\sum_{m'=0}^{M-1}\pi_{m',n-1}f_{m'}(U_n; \bar{\phi})}.
\end{equation*}
Since our proposed implementation and the local randomization implementation  lead different likelihood ratios, it is not surprising that there are two different kinds of K-L divergences for a randomized quantizer  in Section \ref{sec:pbfm}: one defined in (\ref{equ:inforndma}) and the other in (\ref{equ:inforndm}).

\section{Main Results}\label{sec:main}
In the present section, we show that a two-stage test can be an asymptotic optimal solution to problem (P2) by carefully choosing the quantizers used in the second stage. We also give characterizations of these optimal quantizers as well as the corresponding numerical computation.

\subsection{Maximin Quantizers and Asymptotic Theory}\label{subs:rdqntzInfoNmbr}

Let us begin with the definition of some useful information numbers. For a given (deterministic or randomized) quantizer $\bar{\phi}\in\bar{\Phi},$ define
\begin{equation} \label{equ:Imbarphimpmm}
I(m; \bar{\phi})=\min_{m'\neq m} I(m, m' ; \bar{\phi}).
\end{equation}
for each state $m=0,1,\dots, M-1.$ That is, $I(m; \bar{\phi})$ characterizes the least divergence from the state $m$ to other states.

The following theorem, whose proof is presented in Appendix \ref{app:theproof},  establishes the asymptotic properties of a two-stage test $\delta(c)$ as the cost $c$ goes to $0$.

\begin{theorem} \label{the:asymp2stageproc}
	Let $\delta(c)$ be a two-stage test with $\{\bar{\phi}_0,\dots, \bar{\phi}_{M-1}\}$ being the set of (possibly randomized) quantizers used in its second stage. Assume each $\bar{\phi}_m$ randomizes a finite number of deterministic quantizers, and suppose that the prior probabilities $\pi_m>0$ and $I(m;{\bar{\phi}_{m'}})>0$ for all states $m=0,1,\dots, M-1$ and $m'=0,1,\dots, M-1$. Then as $c\to 0$, the time steps $N$ taken by the two-stage test $\delta(c)$ satisfies
  \begin{equation} \label{equ:samplesize2stageproc}
	  \textbf{E}_m\left\{ N \right\}=(1+o(1))|\log c|/I(m;{\bar{\phi}_m}), \quad m=0,1,\dots,M-1,
  \end{equation}
  and the final decision $D$ of  the two-stage test $\delta(c)$  satisfies
  \begin{equation} \label{equ:wrongdecision2stageproc}
	  \textbf{P}_m\left\{ D\neq m \right\}=O(c), \quad m=0,1,\dots, M-1.
  \end{equation}
  Thus, the Bayes risk of the two-stage test $\delta(c)$ is
  \begin{equation} \label{equ:risk2stageproc}
	\mathcal{R}_c(\delta)=c|\log c|(1+o(1)) \sum_{m=0}^{M-1} \pi_m/I(m;{\bar{\phi}_m}).
  \end{equation}
\end{theorem}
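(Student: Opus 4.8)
The plan is to establish the sample-size expansion \eqref{equ:samplesize2stageproc} and the error bound \eqref{equ:wrongdecision2stageproc} first, and then read off the risk formula \eqref{equ:risk2stageproc} by substitution into the Bayes risk, since $c=o(c|\log c|)$ renders the decision-loss term asymptotically negligible against the sampling term. Throughout, let $\mathcal{F}_n$ denote the $\sigma$-field generated by the sensor messages $U_1,\dots,U_n$ available at the fusion center, so that $\pi_{m,n}=\bar{\mathbf{P}}(\mathbf{H}_m\mid\mathcal{F}_n)$ under the prior mixture $\bar{\mathbf{P}}=\sum_m\pi_m\mathbf{P}_m$. The conceptual engine of the whole argument is a per-step drift identity: when the second-stage quantizer $\bar{\phi}_m=\sum_j p^j\phi^j$ is implemented so that the fusion center knows the component $\phi^{j(n)}$ used at each step (Subsection \ref{subs:rndmqntzr}), the conditional mean of the log-likelihood increment is $\mathbf{E}_m[\log(f_m(U_n;\phi^{j(n)})/f_{m'}(U_n;\phi^{j(n)}))\mid\mathcal{F}_{n-1}]=\sum_j p^jI(m,m';\phi^j)=I(m,m';\bar{\phi}_m)$, i.e.\ exactly the K--L divergence \eqref{equ:inforndm} and hence, after minimizing over $m'$, the information number $I(m;\bar{\phi}_m)$ in \eqref{equ:Imbarphimpmm}. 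This is precisely why \eqref{equ:inforndm} rather than \eqref{equ:inforndma} governs the asymptotics.

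The error bound \eqref{equ:wrongdecision2stageproc} is the easiest piece and is quantizer-agnostic: on $\{D=m'\}$ the stopping rule forces $\pi_{m',N}\ge 1-c$, hence $\pi_{m,N}\le c$ for every $m\ne m'$; the change-of-measure identity $\pi_m\mathbf{P}_m(D=m')=\mathbf{E}_{\bar{\mathbf{P}}}[\pi_{m,N}\mathbf{1}\{D=m'\}]\le c$ then gives $\mathbf{P}_m(D=m')\le c/\pi_m$, and summing over $m'\ne m$ yields \eqref{equ:wrongdecision2stageproc}. The same computation applied at the first-stage threshold $u(c)$ shows $\mathbf{P}_m(D_0=m')=O(u(c))$, so the preliminary decision is correct with probability $1-O(u(c))\to1$; and a standard renewal/strong-law analysis of the i.i.d.\ $\phi^0$-messages, using $I(m,m';\phi^0)>0$, gives $\mathbf{E}_m(N_0)=O(|\log u(c)|)=o(|\log c|)$ because $\log u(c)/\log c\to0$.

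For the sample size I would split $\mathbf{E}_m(N)=\mathbf{E}_m[N\mathbf{1}\{D_0=m\}]+\mathbf{E}_m[N\mathbf{1}\{D_0\ne m\}]$ and treat the two terms separately. On the ``good'' event $\{D_0=m\}$ the second stage runs with the correct quantizer $\bar{\phi}_m$; writing the stopping condition as a first passage of $\min_{m'\ne m}\log(\pi_{m,n}/\pi_{m',n})$ across the level $(1+o(1))|\log c|$, a renewal/strong-law argument together with the drift identity above yields $\mathbf{E}_m[(N-N_0)\mathbf{1}\{D_0=m\}]=(1+o(1))|\log c|/I(m;\bar{\phi}_m)$, uniform integrability being supplied by an exponential tail bound on $N$ that holds because all relevant increments have positive drift. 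For the matching lower bound I would apply Wald's identity to the martingale part of $\log(\pi_{m,n}/\pi_{m^\ast,n})$ with $m^\ast=\argmin_{m'\ne m}I(m,m';\bar{\phi}_m)$: the conditional drift equals $I(m,m^\ast;\phi^0)$ on the first stage and $I(m,m^\ast;\bar{\phi}_{D_0})$ on the second, so that $\mathbf{E}_m[\log(\pi_{m,N}/\pi_{m^\ast,N})]=I(m;\bar{\phi}_m)\,\mathbf{E}_m[(N-N_0)\mathbf{1}\{D_0=m\}]+o(|\log c|)$, while the stopping rule and \eqref{equ:wrongdecision2stageproc} force the left-hand side to be at least $(1+o(1))|\log c|$ (the contribution of the $O(c)$-probability event $\{D\ne m\}$ is controlled by Cauchy--Schwarz using $\mathbf{E}_m[N^2]=O(|\log c|^2)$). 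Combining the two directions gives \eqref{equ:samplesize2stageproc}.

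The main obstacle is the ``bad'' event $\{D_0\ne m\}$, where the second stage uses a wrong quantizer $\bar{\phi}_{m'}$ and the true state starts from a depressed posterior $\pi_{m,N_0}\le u(c)$. This is exactly where the full hypothesis $I(m;\bar{\phi}_{m'})>0$ for \emph{all} pairs $m,m'$ is indispensable: it guarantees a uniform positive drift, hence the exponential tail and the second-moment bound $\mathbf{E}_m[N^2]=O(|\log c|^2)$ regardless of which quantizer is in force. A Cauchy--Schwarz estimate $\mathbf{E}_m[N\mathbf{1}\{D_0\ne m\}]\le\sqrt{\mathbf{E}_m[N^2]}\,\sqrt{\mathbf{P}_m(D_0\ne m)}=O(|\log c|\sqrt{u(c)})=o(|\log c|)$ then shows this event is asymptotically negligible, and it is the delicate interplay between the decaying error $O(u(c))$ and the possibly inflated conditional sample size that makes this the technically hardest step. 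Finally, substituting \eqref{equ:samplesize2stageproc} and \eqref{equ:wrongdecision2stageproc} into $\mathcal{R}_c(\delta)=\sum_m\pi_m[c\mathbf{E}_m(N)+\sum_{m'}W(m,m')\mathbf{P}_m(D=m')]$ and absorbing the $O(c)$ decision loss into the $(1+o(1))$ factor produces \eqref{equ:risk2stageproc}.
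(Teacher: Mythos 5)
Your overall skeleton matches the paper's: decompose $\textbf{E}_m\{N\}$ according to the preliminary decision $D_0$, use the per-step drift identity $\textbf{E}_m[\Delta Z_n\mid \mathcal{F}_{n-1}]=I(m,m';\bar{\phi}_m)$ (which is exactly why the divergence (\ref{equ:inforndm}) rather than (\ref{equ:inforndma}) drives the asymptotics), handle the good event $\{D_0=m\}$ by a first-passage/renewal argument for the i.i.d.\ log-likelihood increments, and show the bad event contributes $o(|\log c|)$. Your change-of-measure derivation of (\ref{equ:wrongdecision2stageproc}) is cleaner than the paper, which dismisses that part as standard. Where you genuinely diverge is the bad event: the paper conditions on $\pi_{\cdot,N_0}$, dominates $N_1$ on $\{D_0=m'\}$ by a passage time to level $|\log c|+|\log \pi_{m,N_0}|$, and then proves $\textbf{E}_m\{|\log\pi_{m,N_0}|\,\mathbf{1}\{D_0\neq m\}\}=o(1)$ via an exponential tail bound on $\min_{m'\neq m}\inf_n Z_n(m,m';\phi^0)$ (a ladder-variable/geometric-sum argument after Klass and Gut); you instead invoke Cauchy--Schwarz with an unconditional second-moment bound $\textbf{E}_m[N^2]=O(|\log c|^2)$. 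Your route is arguably slicker and also lets you dispatch the $\{D\neq m\}$ contribution in the lower bound, which the paper leaves as ``proved similarly.''

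The one place you are too quick is the assertion that the exponential tail (equivalently the second-moment bound on $N$, which hinges on $\textbf{E}_m[|\log\pi_{m,N_0}|^2]=O(1)$) ``holds because all relevant increments have positive drift.'' Positive drift alone does not give an exponential, or even square-integrable, tail for the running minimum of a random walk; one needs control of the negative parts of the increments. The paper gets this for free because the sensor alphabet is finite and each $\bar{\phi}_m$ randomizes finitely many deterministic quantizers, so the increments $\Delta Z_n$ are bounded below $\textbf{P}_m$-a.s., and the infimum of the walk is then a geometric sum of bounded descending-ladder variables. You need to say this explicitly, since Assumption 1 by itself does not supply it for an arbitrary walk; once you do, your Cauchy--Schwarz step closes and the rest of the argument is sound.
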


In light of Theorem \ref{the:asymp2stageproc}, to asymptotically minimize the Bayes risk within the class of two-stage tests, it is clear that one should maximize the information numbers $I(m; \bar{\phi}_m)$ for $m=0,1,\dots, M-1$. This leads to a natural definition of the optimal quantizers that we should use in the second stage:

\begin{definition} \label{def:maximin-info}
For $m=0,1,\dots, M-1$, define the maximin quantizer with respect to $\textbf{P}_m$ as
\begin{equation*}\label{equ:maximins}
  \bar{\phi}^{\textrm{max}}_{m}=\argsup_{\bar{\phi}\in\bar{\Phi}}I(m;{\bar{\phi}})
\end{equation*}
and define the corresponding maximin information number by $I(m)=\sup_{\bar{\phi}\in\bar{\Phi}} I(m;{\bar{\phi}}).$
\end{definition}

As shown later in Theorems \ref{the:MaximinQntzrApproxByULQ} and \ref{the:MaximinULQmultialphabets}, the supremum of $I(m,\bar{\phi})$ is attainable, and the maximin quantizers not only exists, but also can be realized as randomization of a finite number of deterministic quantizers. Now we are ready to investigate the asymptotic optimality properties of the two-stage test when the maximin quantizers are used in the second stage. Denote by $\delta_A(c)$ such a two-stage test. Then by Theorems \ref{the:asymp2stageproc}, we have
 \begin{equation}\label{equ:BayesOptimal}
\mathcal{R}_c(\delta_A(c))=(1+o(1)) c|\log c| \sum_{m=0}^{M-1} \pi_m/I(m).
  \end{equation}
as $c\to 0$. What is surprising is that $\delta_A(c)$ is not only the best one within the class of two-stage tests, but also asymptotically optimal among all possible decentralized tests. A key step in the proof is the following important theorem which establishes asymptotic lower bounds on the expected time steps of any decentralized tests with ``suitably small'' probabilities of making incorrect decisions.

\begin{theorem} \label{the:asympoptdeltaIc}
Assume that $\delta(c)$  is a decentralized (not necessarily a two-stage) test that makes a final decision $D$ and
  \begin{displaymath}
	\textbf{P}_m\{D\neq m\}=O(c\log c),\quad m=0,1,\dots, M-1,
  \end{displaymath}
as $c\to 0.$ Then the time step $N$ taken by $\delta(c)$ satisfies
  \begin{eqnarray} \label{equ:BayesSmplSzs}
	  \textbf{E}_m \left\{ N \right\} &\ge& (|\log c|-\log|\log c|+O(1))/ I(m) \cr
&=& (1+o(1)) |\log c| / I(m)
  \end{eqnarray}
for all $m=0,1,\dots,M-1.$
\end{theorem}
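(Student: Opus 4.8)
The plan is to prove a separate one-versus-one lower bound for each alternative $m' \ne m$ and then to fuse these $M-1$ bounds into the single maximin bound by a convexity argument. Throughout fix the true state $m$ and assume $\textbf{E}_m\{N\} < \infty$, since otherwise (\ref{equ:BayesSmplSzs}) is trivial. Because the quantizer $\phi_i$ used at step $i$ is a function of the feedback $V_{i-1}$, hence measurable with respect to the past messages $U_{[1,i-1]}$ seen at the fusion center, the process $\Lambda_n^{m,m'} = \sum_{i=1}^n \log\frac{f_m(U_i;\phi_i)}{f_{m'}(U_i;\phi_i)}$ is exactly the log-likelihood ratio of the first $n$ fusion-center observations between $\textbf{P}_m$ and $\textbf{P}_{m'}$.

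First I would establish, for each $m' \ne m$, the one-versus-one bound
\begin{equation*}
\textbf{E}_m\Big[\sum_{i=1}^N I(m,m';\phi_i)\Big] \ge |\log c| - \log|\log c| + O(1). \tag{$\star$}
\end{equation*}
To get ($\star$) I use Wald's likelihood ratio identity $\textbf{P}_{m'}\{D=m\} = \textbf{E}_m[\mathbf{1}\{D=m\}e^{-\Lambda_N^{m,m'}}]$ together with Jensen's inequality, which yields $\textbf{E}_m[\Lambda_N^{m,m'} \mid D=m] \ge \log\frac{\textbf{P}_m\{D=m\}}{\textbf{P}_{m'}\{D=m\}}$. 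Since $\textbf{P}_m\{D=m\} = 1 - O(c|\log c|) \to 1$ while $\textbf{P}_{m'}\{D=m\} \le \textbf{P}_{m'}\{D\ne m'\} = O(c|\log c|)$, the right-hand side is $|\log c| - \log|\log c| + O(1)$. I then pass to $\textbf{E}_m[\Lambda_N^{m,m'}]$ and, via the optional-stopping (Wald) equation for the mean-zero martingale $\Lambda_n^{m,m'} - \sum_{i\le n} I(m,m';\phi_i)$, identify $\textbf{E}_m[\Lambda_N^{m,m'}] = \textbf{E}_m[\sum_{i=1}^N I(m,m';\phi_i)]$, which gives ($\star$). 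A cleaner route to the same inequality is to apply the data-processing inequality to the binary statistic $\mathbf{1}\{D=m\}$, giving $\textbf{E}_m[\Lambda_N^{m,m'}] \ge d\big(\textbf{P}_m\{D=m\},\textbf{P}_{m'}\{D=m\}\big)$ with $d$ the binary K-L divergence, which sidesteps the conditioning on $\{D=m\}$ altogether.

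The decisive step is to combine the $M-1$ inequalities ($\star$) into the maximin bound. Writing $N^* = \textbf{E}_m\{N\}$ and $v_\phi = (I(m,m';\phi))_{m'\ne m} \in \mathbb{R}^{M-1}$, I form the normalized average divergence vector $\bar d = \frac{1}{N^*}\textbf{E}_m[\sum_{i=1}^N v_{\phi_i}]$. A direct computation shows $\bar d$ is a convex combination of the vectors $\{v_{\phi}\}$: the weights $\textbf{P}_m\{N\ge i\}/N^*$ sum to one because $\sum_{i\ge 1}\textbf{P}_m\{N\ge i\}=N^*$, and the randomization of the $\phi_i$'s only mixes further. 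Hence $\bar d$ lies in the closed convex hull of $\{v_\phi : \phi\in\Phi\}$. Because the additive definition (\ref{equ:inforndm}) makes $\bar\phi\mapsto v_{\bar\phi}$ linear in the mixing weights, this closed convex hull is precisely the set of divergence vectors realizable by randomized quantizers $\bar\phi\in\bar\Phi$; therefore $\min_{m'}\bar d_{m'} \le \sup_{\bar\phi}\min_{m'} I(m,m';\bar\phi) = I(m)$ by Definition \ref{def:maximin-info}. On the other hand, ($\star$) gives $\bar d_{m'} \ge (|\log c|-\log|\log c|+O(1))/N^*$ for every $m'$, so $\min_{m'}\bar d_{m'} \ge (|\log c|-\log|\log c|+O(1))/N^*$. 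Chaining the two bounds yields $I(m) \ge (|\log c|-\log|\log c|+O(1))/N^*$, i.e. $N^* \ge (|\log c|-\log|\log c|+O(1))/I(m)$, which is (\ref{equ:BayesSmplSzs}).

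The main obstacle is the justification of the Wald equation $\textbf{E}_m[\Lambda_N^{m,m'}]=\textbf{E}_m[\sum_{i=1}^N I(m,m';\phi_i)]$ and the removal of the conditioning on $\{D=m\}$ under the weak first-moment Assumption \ref{ass:KLInfoLimit}, since second moments of the increments are not assumed finite. Here I would exploit that, by the data-processing inequality for K-L divergence, $I(m,m';\phi) \le \textbf{E}_m[\log(f_m(X_n)/f_{m'}(X_n))] < \infty$ holds \emph{uniformly} over all quantizers $\phi$; this bounds the conditional mean increment and, together with $N^*<\infty$ and a standard truncation of the log-likelihood increments, legitimizes the optional-stopping identity, showing that first moments suffice. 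The convexity/averaging step, by contrast, is soft: it needs only the linearity of the K-L divergence (\ref{equ:inforndm}) in the randomization weights and the definition of $I(m)$ as a supremum, so that attainability of the maximin quantizer (established later) is not required here.
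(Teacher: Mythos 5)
Your proposal is correct and follows essentially the same route as the paper's Appendix C: a change-of-measure (Wald/data-processing) lower bound on the stopped log-likelihood ratio for each pair $(m,m')$, the optional-stopping identity converting $\textbf{E}_m\{Z_N\}$ into the accumulated divergence $\textbf{E}_m\{\sum_{i\le N} J(m,m';q(\tilde{\phi}_i))\}$, and then an averaging argument with weights $\textbf{P}_m\{N\ge i\}/\textbf{E}_m\{N\}$ showing the normalized divergence vector lies in the convex hull of realizable quantizer divergences, hence its minimum over $m'$ is at most $I(m)$. Your convex-hull step phrased in terms of the vectors $v_\phi$ is exactly the paper's construction of the mixture measure $\mu_m$ combined with Lemma \ref{lem:ImmuSupIsIm}, and your remark that first moments plus the uniform bound $I(m,m';\phi)\le \textbf{E}_m[\log(f_m(X)/f_{m'}(X))]$ suffice for optional stopping is a correct (indeed slightly more careful) justification of the step the paper takes for granted.
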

The proof of Theorem \ref{the:asympoptdeltaIc} is presented in Appendix \ref{app:lwbds}.  The first-order asymptotic lower bound will be sufficient to prove the first-order asymptotic optimality of $\delta_A(c),$ and the reason why we present a higher order lower bounds is due to its potential usefulness in higher-order analysis in further research. By relation (\ref{equ:BayesOptimal}) and Theorem \ref{the:asympoptdeltaIc}, we have

\begin{corollary} \label{cor:deltaAoptimal}
The procedure $\delta_A(c)$ is  asymptotically Bayes up to first-order.
\end{corollary}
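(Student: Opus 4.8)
Let me sketch how I'd prove this.The plan is to sandwich the Bayes risk of the optimal test $\delta_B^*(c)$ between two matching asymptotic expressions, both equal to $(1+o(1))\,c|\log c|\sum_m \pi_m/I(m)$. The upper bound comes for free: since $\delta_B^*(c)$ is by definition a Bayes solution to (P1), it minimizes the Bayes risk among all decentralized tests, so $\mathcal{R}_c(\delta_B^*(c)) \le \mathcal{R}_c(\delta_A(c))$, and the right-hand side already has the desired asymptotics by relation (\ref{equ:BayesOptimal}). The entire content of the corollary therefore reduces to producing a matching asymptotic \emph{lower} bound on $\mathcal{R}_c(\delta_B^*(c))$, for which Theorem \ref{the:asympoptdeltaIc} is tailor-made.

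The one gap to bridge before invoking Theorem \ref{the:asympoptdeltaIc} is verifying its hypothesis, namely that the optimal test has error probabilities $\textbf{P}_m\{D\neq m\}=O(c|\log c|)$. I would extract this from the upper bound just obtained. Writing the Bayes risk as $\mathcal{R}_c(\delta_B^*(c))=\sum_m \pi_m[c\textbf{E}_m(N)+\sum_{m'}W(m,m')\textbf{P}_m\{D=m'\}]$, every summand is nonnegative because $\pi_m>0$, $c>0$, and $W(m,m')\ge 0$. Since the total is $O(c|\log c|)$, each individual term is $O(c|\log c|)$ as well. In particular, for each $m$ the quantity $\pi_m\sum_{m'\neq m}W(m,m')\textbf{P}_m\{D=m'\}$ is $O(c|\log c|)$; using $\pi_m>0$ together with the standing assumption that $W(m,m')>0$ for $m\neq m'$, this forces $\textbf{P}_m\{D\neq m\}=O(c|\log c|)$, which is exactly the condition required by Theorem \ref{the:asympoptdeltaIc}.

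With the hypothesis verified, Theorem \ref{the:asympoptdeltaIc} yields $\textbf{E}_m\{N\}\ge (1+o(1))|\log c|/I(m)$ for every $m$. Discarding the nonnegative decision-loss terms and keeping only the sampling-cost terms gives
\begin{equation*}
\mathcal{R}_c(\delta_B^*(c))\ge c\sum_m \pi_m \textbf{E}_m\{N\}\ge (1+o(1))\,c|\log c|\sum_{m=0}^{M-1}\pi_m/I(m).
\end{equation*}
Combining this lower bound with the upper bound $\mathcal{R}_c(\delta_B^*(c))\le \mathcal{R}_c(\delta_A(c))=(1+o(1))\,c|\log c|\sum_m\pi_m/I(m)$ squeezes $\mathcal{R}_c(\delta_B^*(c))$ to the common value $(1+o(1))\,c|\log c|\sum_m\pi_m/I(m)$, whence $\mathcal{R}_c(\delta_B^*(c))/\mathcal{R}_c(\delta_A(c))\to 1$ as $c\to 0$, which is precisely the first-order asymptotic optimality (\ref{equ:AsmptOptmlTest}).

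The only genuinely delicate point is the error-probability extraction in the second paragraph: one must be sure that the optimality of $\delta_B^*(c)$ can legitimately be leveraged to bound its own error probabilities through the comparison test $\delta_A(c)$. This is where the nonnegativity of all Bayes-risk components and the strict positivity of the priors and losses are essential; without $W(m,m')>0$ for $m\neq m'$ one could not convert a small weighted error loss into a small raw error probability, and the hypothesis of Theorem \ref{the:asympoptdeltaIc} would not be available.
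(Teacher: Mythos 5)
Your proposal is correct and follows essentially the same route as the paper's own proof: bound $\mathcal{R}_c(\delta_B^*(c))$ above by $\mathcal{R}_c(\delta_A(c))$ via optimality, extract the $O(c|\log c|)$ error probabilities from the nonnegative components of the Bayes risk so that Theorem \ref{the:asympoptdeltaIc} applies, and then lower-bound the Bayes risk by the sampling cost alone to squeeze the ratio to $1$. The only difference is that you spell out the error-probability extraction step in more detail than the paper, which states it in one sentence.
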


\begin{proof}  Let $\delta_{B}^{*}(c)$ be the Bayes procedure. By definition,  $\mathcal{R}_c(\delta_{B}^{*}(c)) \le \mathcal{R}_c(\delta_A(c)).$ Using the relation (\ref{equ:BayesOptimal}) and the definition of Bayes risk $\mathcal{R}_c(\delta_{B}^{*}(c)),$
the probabilities for the Bayes procedure $\delta_{B}^{*}(c)$ to make incorrect decisions are at most $O(c\log c).$ By Theorem \ref{the:asympoptdeltaIc}, the stopping time $\tau^{*}_{c}$ of the Bayes procedure $\delta_{B}^{*}(c)$ satisfies (\ref{equ:BayesSmplSzs}). Now using the definition of Bayes risk again, for any test, the cost of time steps taken to make the final decision is only portion of the Bayes risk. In particular,
\[
\mathcal{R}_c(\delta_{B}^{*}(c)) \ge c \sum_{m} \pi_{m} \textbf{E}_m \left\{ \tau^{*}_{c} \right\} \ge  (1+o(1)) c |\log c|  \sum_{m} \pi_{m} / I(m).
\]
Combining all arguments yields that $\mathcal{R}_c(\delta_B^*(c))/\mathcal{R}_c(\delta_A(c)) \rightarrow 1$ as $c \rightarrow 0,$
completing the proof of the corollary.
\end{proof}

It is useful to point out that the  test $\delta_A(c)$  is asymptotic Bayes mainly because the local sensor uses the maximin quantizers $\bar{\phi}^{\textrm{max}}_m$'s in the second stage.  Since the maximin quantizers do not depend on the prior distribution $\{\pi_m\}$'s, it is easy to see from (\ref{equ:samplesize2stageproc}) and (\ref{equ:wrongdecision2stageproc}) that the asymptotic optimality properties  of $\delta_A(c)$ are actually robust with respect to $\{\pi_m\}$ as long as all prior probabilities are positive. Likewise, the asymptotic Bayes properties still hold if the stopping times of $\delta_A(c)$ at the fusion center are replaced by other efficient multi-hypotheses tests, e.g., those in Draglin, Tartakovsky and Veeravalli \cite{dtv, dtv2}.

\subsection{Characterizing the Maximin Quantizers.}\label{subs:SearchMaximinQntzr}
In this subsection, we provide a deeper understanding of the maximin quantizers $\{\bar{\phi}^{\textrm{max}}_m: m=0,1,\dots,M-1\}$ and also illustrate how to compute them explicitly when the sensor messages are binary.
%In particular, we explain that each $\bar{\phi}^{\textrm{max}}_m$ is obtainable by randomizing at most $M-1$ deterministic quantizers.

Let us first introduce the unambiguous likelihood quantizer (ULQ) which was first proposed in Tsitsiklis \cite{tsi} as a generalization of Monotone Likelihood Ratio Quantizer (MLRQ). For notational convenience, here we give the definition of ULQ only for the case of binary sensor messages, and the general definition will be provided in Definition \ref{def:ULQGeneral}  in Subsection \ref{subs:non2srmsg}.

\medskip
\begin{definition}\label{def:ULQ}
A deterministic quantizer $\phi\in\Phi$ is said to be an unambiguous likelihood quantizer if there exist real numbers $\{a_m: m=0,\dots,M-1\}$ such that
\begin{equation}\label{equ:dfnULQ1}
\phi(X)=I(\sum_{m=0}^{M-1} a_m f_m(X)>0)
\end{equation}
and for any $0\le m'\le M-1$, the set $\{a_m\}$ satisfies the following condition
\begin{equation}\label{equ:dfnULQ2}
	\textbf{P}_{m'}\left\{  \sum_{m=0}^{M-1} a_m f_m(X)=0\right\}=0.
\end{equation}
\end{definition}

When relation (\ref{equ:dfnULQ2}) holds for any set of $\{a_m\}$ that are not simultaneous zero, the set of pdf's $\{f_m\}$ are said to be \textit{linearly independent}. With the definition of ULQs, the following theorem  characterizes the form of the maximin quantizers $\bar{\phi}^{\textrm{max}}_m.$ The proof is very technical and is deferred to Appendix \ref{app:qntzr}.

\begin{theorem}\label{the:MaximinQntzrApproxByULQ}
	For each  $m=0,1,\dots,M-1$, the maximin quantizer $\bar{\phi}^{\textrm{max}}_m$ exists and can be chosen as a randomization of at most $M-1$ deterministic quantizers.   Moreover, if the pdf's $\{f_m\}$ are linearly independent, then it can actually be chosen as a randomization of at most $M-1$ deterministic ULQ quantizers.
%Otherwise, it can be approximated by a sequence of randomized quantizers $\{\bar{\phi}_{m,i}\}$ such that $I(m;{\bar{\phi}_{m,i}})\to I(m)$, and each randomized quantizer $\bar{\phi}_{m,i}$ can be taken as a randomization of at most $M-1$ ULQs.
\end{theorem}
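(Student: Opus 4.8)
The plan is to recast the maximin problem as a finite-dimensional convex-geometry problem and then combine a minimax argument with Carath\'eodory's theorem. Since the messages are binary, a deterministic quantizer $\phi$ is determined by the set $A=\{\phi=1\}$, and the only relevant data are the probabilities $p_j=\textbf{P}_j(A)$, $j=0,\dots,M-1$. Fixing the target state $m$, I associate to each $\phi$ the vector $\mathbf v(\phi)=(I(m,m';\phi))_{m'\ne m}\in\mathbb R^{M-1}$, which is a function $\mathbf v(\phi)=\Psi(p_0,\dots,p_{M-1})$ of the probability vector through the binary K-L divergences. Because the K-L divergence in (\ref{equ:inforndm}) is additive over mixtures, the set of vectors achievable by randomized quantizers is exactly $\textrm{conv}(\mathcal V)$ with $\mathcal V=\{\mathbf v(\phi):\phi\in\Phi\}$, and $I(m;\bar\phi)=\min_{m'}v_{m'}$. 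Thus $I(m)=\sup_{\mathbf v\in\textrm{conv}(\mathcal V)}\min_{m'\ne m}v_{m'}$, a concave maximization over a convex set.

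First I would settle existence. By Assumption~\ref{ass:KLInfoLimit} the measures $\textbf{P}_m,\textbf{P}_{m'}$ are mutually absolutely continuous, and the data-processing inequality gives $I(m,m';\phi)\le\textbf{E}_m[\log(f_m/f_{m'})]<\infty$, so $\mathcal V$ is bounded; mutual absolute continuity also makes $\Psi$ finite and continuous on the relevant region. The range $\mathcal D=\{(\textbf{P}_0(A),\dots,\textbf{P}_{M-1}(A)):A\}$ of this vector measure is compact, so $\mathcal V=\Psi(\mathcal D)$ is compact and its convex hull $\textrm{conv}(\mathcal V)$ (what randomized quantizers achieve) is compact. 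Hence the continuous objective $\min_{m'}v_{m'}$ attains its supremum $I(m)$ at some $\mathbf v^{*}$, establishing that the maximin quantizer exists and realizing $\bar\phi_m^{\textrm{max}}$ as a point of $\textrm{conv}(\mathcal V)$.

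Next, the structural bound. Writing $\min_{m'}v_{m'}=\min_{\lambda\in\Delta}\langle\lambda,\mathbf v\rangle$ over the simplex $\Delta$ of weights $(\lambda_{m'})_{m'\ne m}$, the bilinear form $\langle\lambda,\mathbf v\rangle$ on the compact convex sets $\textrm{conv}(\mathcal V)\times\Delta$ satisfies von Neumann's minimax theorem, so $I(m)=\min_{\lambda}\max_{\mathbf v\in\textrm{conv}(\mathcal V)}\langle\lambda,\mathbf v\rangle=\min_{\lambda}\max_{\phi\in\Phi}\sum_{m'}\lambda_{m'}I(m,m';\phi)$. Let $\lambda^{*}$ attain the outer minimum. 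Then $H=\{\mathbf v:\langle\lambda^{*},\mathbf v\rangle=I(m)\}$ is a supporting hyperplane of $\textrm{conv}(\mathcal V)$. A short complementary-slackness computation ($v^{*}_{m'}\ge I(m)$ for all $m'$, together with $\langle\lambda^{*},\mathbf v^{*}\rangle\le I(m)$) forces $\langle\lambda^{*},\mathbf v^{*}\rangle=I(m)$, i.e. $\mathbf v^{*}\in H$. Since $H$ supports the hull, $H\cap\textrm{conv}(\mathcal V)=\textrm{conv}(\mathcal V\cap H)$, and $\mathcal V\cap H$ lies in the $(M-2)$-dimensional affine space $H$; Carath\'eodory's theorem then writes $\mathbf v^{*}=\sum_{i=1}^{M-1}p_i\,\mathbf v(\phi_i)$ with each $\mathbf v(\phi_i)\in H$. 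Hence $\bar\phi_m^{\textrm{max}}=\sum_i p_i\phi_i$ is a randomization of at most $M-1$ deterministic quantizers, each of which maximizes $G(\phi):=\sum_{m'}\lambda^{*}_{m'}I(m,m';\phi)$.

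Finally, the ULQ identification, which I expect to be the main obstacle. Each component $\phi_i$ maximizes $G$ over $\Phi$, and $G$, viewed as a function of the probability vector $\mathbf p$ over the convex region achievable by (soft) randomized quantizers, is \emph{convex}; since its maximum is global there, the first-order optimality condition at the maximizer $\mathbf p_i$ says that $\mathbf p_i$ maximizes the \emph{linear} functional $\mathbf q\mapsto\langle\nabla G(\mathbf p_i),\mathbf q\rangle$. Writing $a_j=\partial G/\partial p_j$ and $\langle\nabla G(\mathbf p_i),\mathbf q\rangle=\int_A\sum_j a_j f_j$, the maximizing set must be $A_i=\{x:\sum_j a_j f_j(x)>0\}$, which is exactly the ULQ form (\ref{equ:dfnULQ1}). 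The delicate points are: (i) the gradient is well defined, i.e. $\mathbf p_i$ lies in the interior $(0,1)^{M}$, which follows from mutual absolute continuity and informativeness; (ii) the coefficient vector $\mathbf a$ is nonzero, which holds because $G(\mathbf p_i)=I(m)>0$ strictly exceeds the minimal value $0$ of the convex $G$, so $\nabla G(\mathbf p_i)\ne 0$; and (iii) the boundary set $\{\sum_j a_j f_j=0\}$ is null under every $\textbf{P}_{m'}$, which is precisely (\ref{equ:dfnULQ2}) and is guaranteed exactly when the $\{f_m\}$ are linearly independent. Verifying these non-degeneracy conditions carefully, and handling maximizers that sit on the boundary of the probability region, is where most of the technical work lies; when the $\{f_m\}$ are not linearly independent one simply drops the ULQ conclusion and retains the $M-1$ deterministic-quantizer statement.
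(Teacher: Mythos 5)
Your proposal is correct in substance and shares the paper's convex-geometric skeleton --- both arguments exploit the fact that, because the K-L divergence in (\ref{equ:inforndm}) is a linear average over the randomization, the achievable set of divergence vectors $(I(m,m';\bar\phi))_{m'\neq m}$ is exactly the convex hull of the deterministic ones (this is what the paper's sets $\mathscr{I}, \mathscr{I}^{*}$ and Lemma \ref{lem:ImmuSupIsIm} encode), and both then apply a Carath\'eodory count in $\mathbb{R}^{M-1}$ to get $M-1$ components. Where you genuinely diverge is in how the ULQ structure is obtained. The paper works in the space of distribution vectors $q(\bar\phi)$, invokes Tsitsiklis's characterization that the ULQ vectors $Q_U$ are dense in the extreme points $Q_\alpha$ of $\bar Q$, proves separately (Lemma \ref{lem:ExpisExt}) that linear independence forces $Q_\alpha=Q_U$ in the binary case via a limit argument on ULQ coefficients, and then decomposes the optimal boundary point into at most $M-1$ points of $Q_\alpha$. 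You instead run a von Neumann minimax argument to produce an explicit supporting hyperplane with normal $\lambda^{*}$, note that each component quantizer must maximize the weighted objective $G(\phi)=\sum_{m'}\lambda^{*}_{m'}I(m,m';\phi)$, and extract the ULQ coefficients as the gradient $\nabla G$ of this jointly convex function of the probability vector --- effectively re-deriving the extremal characterization of ULQs in situ rather than citing it. Your route is more self-contained and has the bonus of identifying the ULQ coefficients $a_j$ constructively as $\partial G/\partial p_j$ at the optimum (useful for the numerical search in Subsection \ref{subs:SearchMaximinQntzr}); the paper's route, by factoring through $Q_\alpha$, handles the general-alphabet Theorem \ref{the:MaximinULQmultialphabets} and the linearly dependent case (where only approximation by ULQ randomizations survives) with no extra work, which your gradient argument would not give directly. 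The non-degeneracy points you flag are real but all resolve as you suspect: Assumption \ref{ass:KLInfoLimit} forces mutual absolute continuity, so the optimal probability vectors lie in $(0,1)^{M}$ (ruling out boundary maximizers), $I(m)>0$ under linear independence gives $\nabla G\neq 0$, and linear independence kills the tie set, so each component can indeed be chosen to be an exact ULQ.
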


Clearly, when testing $M = 2$ simple hypotheses, the ULQs become MLRQs, and thus the maximin quantizers in the second stage is just the deterministic MLRQ, which is consistent with those results in Mei \cite{mei}.

Note that Theorem \ref{the:MaximinQntzrApproxByULQ} reduces the search of the maximin quantizers from an infinite dimensional function space to a parameter space of dimension $O(M^2)$. To see this, fix a state $m$ and define $M^2-1$ parameters as probability masses
$\{p^j_m: 1\le j\le M-1, p^j_m\ge 0, \sum_{j=1}^{M-1} p^j_m=1\},$
and ULQ coefficients
$\{a_{m,m'}^{j}: 1\le j\le M-1, 0\le m'\le M-1, \sum_{m'=0}^{M-1}  (a_{m,m'}^{j})^2=1\}.$
Based on every combination of these parameters, define by $\bar{\phi}$ the quantizer randomizing $M-1$ ULQs:
$\bar{\phi}=\sum_{j=1}^{M-1} p_{m}^j \phi_{m}^j,$
where
\begin{equation*}\label{equ:ULQDcmpdFmMaximinForm}
  \phi_{m}^j(X)=I(\sum_{m'=0}^{M-1} a^j_{m,m'} f_{m'}(X)>0).
\end{equation*}
The maximin quantizer $\bar{\phi}^{\textrm{max}}_m$ can then be found as $\bar{\phi}$ that maximizes
\begin{equation}\label{equ:ObjFunc}
\min_{l \neq m} I(m,l;{\bar{\phi}})
\end{equation}
among all possible combinations of 
\begin{equation*}
\{p^j_m; a^j_{m,m'}\}_{1\le j\le M-1, 0\le m'\le M-1}.
\end{equation*}

To further reduce computational complexity of the maximin quantizers, we can apply the following lemma
which provides a sufficient condition that a deterministic MLRQ quantizer is the maximin quantizer.

\begin{lemma} \label{lem:rdccomplexitesInSolveMmQntzr}
Given $m'\neq m$, let $\phi_{m,m'}$ be the deterministic MLRQ quantizer that maximizes the K-L divergence of $m'$ from $m$, i.e.,
\begin{displaymath}
	\phi_{m,m'}=\argsup_{\bar{\phi}\in\bar{\Phi}} I(m, m';{\bar{\phi}}).
\end{displaymath}
If there exists a state $m'\neq m$ such that for any other state $m''\neq m$:
  \begin{displaymath}
	I(m,m'';{\phi_{m,m'}})\ge I(m,m';{\phi_{m,m'}})
  \end{displaymath}
then $\phi_{m,m'}$ is also the maximin quantizer for state $m$.
\end{lemma}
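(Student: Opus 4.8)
The plan is to reduce the maximin (max-min) optimization to a single pairwise optimization by using two ingredients: the extremal property that \emph{defines} $\phi_{m,m'}$, and the hypothesis that identifies $m'$ as the state realizing the least divergence at $\phi_{m,m'}$. First I would record the defining property of $\phi_{m,m'}$. Since $\phi_{m,m'}=\argsup_{\bar{\phi}\in\bar{\Phi}} I(m,m';{\bar{\phi}})$, where the supremum is taken over \emph{all} quantizers, deterministic or randomized, we have the pointwise domination $I(m,m';{\bar{\phi}})\le I(m,m';{\phi_{m,m'}})$ for every $\bar{\phi}\in\bar{\Phi}$. The one point here that is not purely formal is that this supremum is attained by a deterministic MLRQ; this is delivered by the fact that the averaged K-L divergence in (\ref{equ:inforndm}) is affine in the randomization weights $\{p^j\}$, so the supremum of $I(m,m';\cdot)$ over $\bar{\Phi}$ is pushed to an extreme point, i.e.\ a deterministic quantizer, and the optimal deterministic quantizer for the two-point problem ($m$ versus $m'$) is the MLRQ. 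In effect, the notation $\phi_{m,m'}$ already presupposes exactly this fact.

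Next I would use the hypothesis to evaluate the maximin objective $I(m;{\phi_{m,m'}})=\min_{l\neq m} I(m,l;{\phi_{m,m'}})$ from (\ref{equ:Imbarphimpmm}) at the candidate $\phi_{m,m'}$. The assumption says that $I(m,m'';{\phi_{m,m'}})\ge I(m,m';{\phi_{m,m'}})$ for every $m''\neq m$, which means the minimum over $l\neq m$ of $I(m,l;{\phi_{m,m'}})$ is attained at $l=m'$. Hence $I(m;{\phi_{m,m'}})=I(m,m';{\phi_{m,m'}})$; that is, at the candidate quantizer the max-min value coincides with the single pairwise divergence that $\phi_{m,m'}$ was built to maximize.

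Finally I would close the argument with one chain of inequalities. For an arbitrary competitor $\bar{\phi}\in\bar{\Phi}$, bounding the minimum by the single term $l=m'$ and then invoking the domination from the first step gives $I(m;{\bar{\phi}})=\min_{l\neq m}I(m,l;{\bar{\phi}})\le I(m,m';{\bar{\phi}})\le I(m,m';{\phi_{m,m'}})=I(m;{\phi_{m,m'}})$, where the final equality is the conclusion of the second step. Thus $\phi_{m,m'}$ attains $\sup_{\bar{\phi}\in\bar{\Phi}} I(m;{\bar{\phi}})$, so by Definition \ref{def:maximin-info} it is a maximin quantizer for state $m$. I do not expect a genuine obstacle in this argument: once the extremal property over randomized quantizers is in hand, the proof is the short inequality chain above. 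The only substantive point to state carefully is the attainment of the pairwise supremum by a deterministic MLRQ, which rests on the linearity in the randomization weights noted above and, if desired, can be cited from the two-hypothesis theory (e.g.\ Tsitsiklis \cite{tsi}) rather than reproved here.
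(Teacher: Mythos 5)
Your proof is correct and follows essentially the same route as the paper's: identify $I(m;\phi_{m,m'})=I(m,m';\phi_{m,m'})$ from the hypothesis, then for any competitor $\bar{\phi}$ chain $I(m;\bar{\phi})\le I(m,m';\bar{\phi})\le I(m,m';\phi_{m,m'})$. The extra remarks on attainment of the pairwise supremum by a deterministic MLRQ are a reasonable supplement but not part of the paper's (very short) argument.
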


\begin{proof}
  By definition, \[I(m;{\phi_{m,m'}})=\min_{m''\neq m}I(m,m'';{\phi_{m,m'}})=I(m,m';{\phi_{m,m'}}).\]
  Take any $\bar{\phi}\in\bar{\Phi}$,
  \begin{displaymath}
	I(m;{\bar{\phi}})\le I(m,m';{\bar{\phi}}) \le I(m,m';{\phi_{m,m'}})=I(m;{\phi_{m,m'}})
  \end{displaymath}
 and thus $\phi_{m,m'}$ is the maximin quantizer for state $m$.
\end{proof}

\section{Extensions}\label{sec:ext}

Section \ref{sec:main} deals with the simplest case when the network only has a single sensor with binary sensor messages. In this section, we extend our results to three more general scenarios: 1) the sensor messages  belong to a finite alphabet (not necessarily binary); 2) there is more than one sensor in the network (though observations are independent between different sensors); and 3) the hypotheses are composite.

\subsection{Sensor Messages Belonging to a Finite Alphabet}\label{subs:non2srmsg}

Suppose the network still consists of only one sensor, but now the sensor messages belong to a finite alphabet, say, $\{0,1,\dots, l-1\}$ with $l\ge 2$. In this scenario, the definitions of two-stage tests (Subsection \ref{subs:2stage}) and maximin quantizers (Subsection \ref{subs:rdqntzInfoNmbr})
are still applicable, and Theorem \ref{the:asymp2stageproc} and Theorem \ref{the:asympoptdeltaIc} also hold. The only change is Theorem \ref{the:MaximinQntzrApproxByULQ}, as we need to consider the following general definition of ULQ, which originally proposed in Tsitsiklis \cite{tsi} and includes Definition \ref{def:ULQ} as a special case.

\begin{definition} \label{def:ULQGeneral}
When the sensor messages belong to a finite alphabet $\{0, 1, \dots, l-1\}$, a deterministic quantizer $\phi\in\Phi$ is said to be an unambiguous likelihood quantizer (ULQ) if and only if there exist real numbers $\{a_{i,m}: 0\le i\le l-1, 0\le m\le M-1\}$ such that
  \begin{equation} \label{equ:ULQmultialpha}
	\phi(X)=\argmin_{0 \le i \le l-1} \sum_{m=0}^{M-1} a_{i,m}f_m(X)
  \end{equation}
  and the probability of a tie is zero under every $\textbf{P}_m$ for $m=0,1,\dots, M-1$.
\end{definition}

With this definition, Theorem \ref{the:MaximinQntzrApproxByULQ} can be generalized as follows.
%to, whose
\begin{theorem} \label{the:MaximinULQmultialphabets}
  Suppose the sensor messages belong to a finite alphabet $\{0,1,\dots, l-1\}$ with $l \ge 2$. Then for $m=0,1,\dots, M-1$, the maximin quantizer $\bar{\phi}^{\textrm{max}}_m$ can be realized as a randomization of at most $M-1$ deterministic quantizers. Moreover, for every $m$, there exists a sequence of quantizers $\{\bar{\phi}_{m,i}\}$ each randomizing at most $M-1$ ULQs, such that $I(m;{\bar{\phi}_{m,i}})\to I(m)$, that is, the maximin quantizer $\bar{\phi}^{\textrm{max}}_m$ can be approximated by $\{\bar{\phi}_{m,i}\}$.
\end{theorem}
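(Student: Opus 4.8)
The plan is to convert the functional maximin problem of Definition~\ref{def:maximin-info} into a finite-dimensional convex-geometry problem, settle the cardinality bound by a separation-plus-Carath\'eodory argument, and then identify the resulting deterministic quantizers as (limits of) ULQs of the form (\ref{equ:ULQmultialpha}) through a pointwise variational computation. The key to the reduction is that, under the \emph{averaged} divergence (\ref{equ:inforndm}), the information vector $\mathbf{I}(\bar\phi)=(I(m,m';\bar\phi))_{m'\neq m}\in\mathbb{R}^{M-1}$ is \emph{affine} in the randomization weights. Hence the set of information vectors achievable over $\bar\Phi$ is precisely the convex hull $\mathrm{conv}(S)$ of $S:=\{\mathbf{I}(\phi):\phi\in\Phi\}$, and by (\ref{equ:Imbarphimpmm}),
\[
I(m)=\sup_{\mathbf v\in\mathrm{conv}(S)}\ \min_{m'\neq m} v_{m'},
\]
the largest attainable value of the smallest coordinate over $\mathrm{conv}(S)$.

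\emph{Cardinality bound.} By the definition of $I(m)$, no point of $\mathrm{conv}(S)$ has all coordinates strictly exceeding $I(m)$, so the open box $\{v:v_{m'}>I(m)\text{ for all }m'\neq m\}$ is disjoint from $\mathrm{conv}(S)$. Separating these two convex sets yields a nonzero normal $\lambda$ which must satisfy $\lambda\ge 0$ (the box is an upward orthant translate, so any negative coordinate of $\lambda$ would make the linear functional unbounded below on it) and a supporting hyperplane $H=\{\langle\lambda,v\rangle=c\}$ of $\mathrm{conv}(S)$ on which the optimizer $\mathbf v^{\ast}$ necessarily lies, since $v^{\ast}_{m'}\ge I(m)$ forces $\langle\lambda,\mathbf v^{\ast}\rangle\ge c$ while membership in $\mathrm{conv}(S)$ forces $\le c$. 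Because $H$ supports, $\mathrm{conv}(S)\cap H=\mathrm{conv}(S\cap H)$, a convex set lying in the $(M-2)$-dimensional hyperplane $H$; Carath\'eodory then expresses $\mathbf v^{\ast}$ as a convex combination of at most $M-1$ points of $S$, that is, as a randomization of at most $M-1$ deterministic quantizers. This is the first assertion.

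\emph{ULQ form and the approximation.} Each contributing point $\phi\in S\cap H$ maximizes the linear functional $\phi\mapsto\langle\lambda,\mathbf I(\phi)\rangle=\sum_{m'\neq m}\lambda_{m'}I(m,m';\phi)$ over $\Phi$. Writing $f_j(u;\phi)=\int_{\{\phi=u\}}f_j$ and differentiating this functional with respect to reassigning an infinitesimal mass at $x$ to bin $u$, the marginal gain is a linear form $\sum_{j=0}^{M-1}c_{u,j}f_j(x)$ whose coefficients $c_{u,j}$ depend only on the aggregates $f_j(u;\phi)$, which are frozen at the optimum. Consequently an optimal $\phi$ must send $x$ to $\argmax_{0\le u\le l-1}\sum_j c_{u,j}f_j(x)$, which is exactly the ULQ form (\ref{equ:ULQmultialpha}) after the sign change $a_{u,j}=-c_{u,j}$. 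When the argmax ties occur with probability zero this $\phi$ is a genuine ULQ; otherwise one perturbs the coefficients $c_{u,j}$ slightly to break the ties, producing ULQs whose information vectors converge to $\mathbf I(\phi)$, and hence a sequence $\{\bar\phi_{m,i}\}$ of randomizations of at most $M-1$ ULQs with $I(m;\bar\phi_{m,i})\to I(m)$.

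\emph{Main obstacle.} The delicate points are attainment and the tie analysis. Since individual divergences $I(m,m';\phi)$ may equal $+\infty$ and $S$ need not be closed, I must secure attainment of the supremum and guarantee that the extreme points of the relevant face lie in $S$; I would obtain this from the closedness and convexity of the achievable family of induced distributions (a Lyapunov-type argument on the nonatomic vector measure $(f_0,\dots,f_{M-1})$), from which continuity of the divergences transfers attainment. The genuinely hard step is the last one: for binary messages the single separating form $\sum_m a_m f_m=0$ has probability zero under linear independence, which is why Theorem~\ref{the:MaximinQntzrApproxByULQ} delivers ULQs \emph{exactly}; but for $l\ge 3$ the argmax competes $l$ linear forms and a pairwise tie between two of them can carry positive probability even under linear independence, so ties cannot in general be removed without perturbing the coefficients. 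This is precisely why only the approximating sequence $\{\bar\phi_{m,i}\}$, rather than an exact randomization of ULQs, can be asserted here.
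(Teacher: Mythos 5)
Your reduction to convex geometry and the cardinality bound are sound and essentially parallel the paper's argument: the paper also passes to the $(M-1)$-dimensional set of (averaged) information vectors, uses compactness and convexity, and extracts a representation by at most $M-1$ points via a supporting-hyperplane/Carath\'eodory argument (citing Hormander); your explicit separation from the open box $\{v: v_{m'}>I(m)\}$ is a slightly more detailed version of the same step, and the attainment issues you flag are handled in the paper by citing Tsitsiklis for compactness of the set $Q$ of induced distribution vectors and boundedness and continuity of the divergences under Assumption~\ref{ass:KLInfoLimit}. So the first assertion of the theorem is fine on your route.

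The gap is in the ULQ step. Each contributing $\phi$ maximizes $\sum_{m'}\lambda_{m'}I(m,m';\phi)$ over $\Phi$, which as a function of the induced distribution vector $q(\phi)$ is \emph{convex}, not linear; your linearization at the optimum (a subgradient inequality) correctly forces $q(\phi)$ into the face of $\bar{Q}$ where $\sum_j c_{u,j}f_j$ is pointwise maximized, but on a tie set of positive probability $\phi$ may \emph{split} mass between tying bins in a way no single coefficient matrix reproduces. Perturbing $c_{u,j}$ produces ULQs that assign the entire tie set to whichever bin wins under the perturbation, so their distribution vectors converge to a particular all-or-nothing tie resolution, \emph{not} to $q(\phi)$; since the functional is only convex on that face, the limiting value of $\sum_{m'}\lambda_{m'}I(m,m';\cdot)$ can be strictly smaller than at $\phi$, and the min-coordinate of the resulting randomization can drop strictly below $I(m)$. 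The repair is the step your proposal skips and the paper makes first: decompose each contributing $q(\phi)$ into extreme points of $\bar{Q}$, observing that by convexity of the Kullback--Leibler divergence in $q$ this can only \emph{increase} every component of the averaged information vector (the paper's ``Claim 1,'' which is why the Carath\'eodory extraction is performed inside $\mathscr{I}^*_\alpha$ rather than $\mathscr{I}^*$, keeping the count at $M-1$); one then invokes Tsitsiklis's theorem that $Q_U$ is dense in the set $Q_\alpha$ of extreme points to get the approximating sequence. Without routing through $Q_\alpha$, your perturbation argument does not establish $I(m;\bar{\phi}_{m,i})\to I(m)$.
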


The  proof of  Theorem \ref{the:MaximinULQmultialphabets} is presented in Appendix \ref{app:qntzr}.
Note that there is a significant difference between Theorem \ref{the:MaximinQntzrApproxByULQ} and Theorem \ref{the:MaximinULQmultialphabets}. When the sensor messages are binary (i.e., $l = 2$), we are sure that the maximin quantizers can be attained by randomizing $M-1$ ULQs if the pdfs $f_0$,\dots, $f_{M-1}$ are linearly independent. However, this may not be true for $l\ge 3$. Fortunately, since the maximin quantizers can always be approximated as described in Theorem \ref{the:MaximinULQmultialphabets}, the issue is not essential from the viewpoint of numerical computation, as  we can compute  the maximin quantizers (or their approximations) in the same way as  in Subsection \ref{subs:SearchMaximinQntzr} except that each ULQ is now associated with an $l$ by $M$ matrix $A=\{a_{i,m}\}$.

Another benefit of Theorem \ref{the:MaximinULQmultialphabets} is that it can deal with the case when the sensor messages are binary but the pdf's are not linearly independent. Such a case was not addressed by Theorem \ref{the:MaximinQntzrApproxByULQ}, and Theorem \ref{the:MaximinULQmultialphabets} shows that although the maximin quantizer $\bar{\phi}^{\textrm{max}}_m$ may no longer be a randomization of at most $M-1$ ULQs, it can still be  approximated by a sequence of qnatizers $\{\bar{\phi}_{m,i}\},$ each one randomizing at most $M-1$ ULQs.

%Moreover, we are free to assume that the first row of $A$ is all zero because otherwise we can substract it from each row and leave (\ref{equ:ULQmultialpha}) unchanged. We also require a normalization $\sum (a_{i,m})^2 =1$ because by Definition \ref{def:ULQGeneral}, an all-zero matrix does not yield a ULQ. Call any $l$ by $M$ matrix satisfying the above two conditions as a standardized ULQ coefficient matrix and let $\mathcal{A}$ be the set of these matrices. Denote as $\phi_A$ the quantizer formed by the entries of $A\in\mathcal{A}$ as in (\ref{equ:ULQmultialpha}).
%For any state $m=0,1,\dots, M-1$, take probability masses $\{p^j_m: 1\le j\le M-1, p^j_m\ge 0, \sum_j p^j_m=1\},$ and $M-1$ standardized ULQ coefficient matrices $A_m^1,\dots, A_m^{M-1}$. Define $\bar{\phi}=\sum_{j=1}^{M-1} p^j_m \phi_{A_m^j}$, and the maximin quantizer $\bar{\phi}^{\textrm{max}}_m$ is such $\bar{\phi}$ that maximizes $I(m;{\bar{\phi}})$ among any possible combinations of $\{p^j_m; A^j_m\}$.

\subsection{Multiple Sensors}\label{subs:mulSnr}

We now assume that there are $K \ge 2$ sensors in the system in which all raw observations are independent from sensor to sensor conditioned on each $\textbf{P}_m$, $m=0,1,\dots, M-1$. In the following notation, we use the superscripts to denote different sensors
as in Section \ref{sec:pbfm}. For simplicity, we assume the sensor messages are binary, since the extension to the scenario with a finite alphabet sensor  messages can be easily done as in Subsection \ref{subs:non2srmsg}. The key to extend our results  is to treat the quantizers in Sections \ref{sec:2stage} and \ref{sec:main} as vectors of quantizers. Specifically, a (deterministic) quantizer vector is $\phi=(\phi^1,\dots,\phi^K),$ where each local sensor $S^k$ uses the deterministic quantizer  $\phi^k$ to  quantize the raw data.   Denote by  $\Phi^{(K)}$ the set of all (deterministic) quantizer vectors, and define a randomized quantizer vector
\[
\bar{\phi}=\sum_j p^j \phi^{\cdot,j}
\]
where $\phi^{\cdot,j}=(\phi^{1,j},\dots,\phi^{K,j}) \in \Phi^{(K)},$ and $\{p^j\}$ are the probability masses assigned to the set of deterministic quantizer vectors $\{\phi^{\cdot,j}\}\subset\Phi^{(K)}$. Let the set of all quantizer vectors be $\overline{\Phi^{(K)}}$ (a deterministic quantizer can be viewed as a randomized one which assigns probability one to itself).
The implementation of a randomized quantizer vector $\bar{\phi}=\sum p^j\phi^{\cdot,j}$ is the same as that in Subsection \ref{subs:rndmqntzr}, i.e., the fusion center knows about which deterministic quantizer vector is picked, either letting the fusion center conduct the randomization directly or using the pseudo-randomization block design at the local sensor level. Likewise, for a deterministic quantizer vector $\phi=(\phi^1,\dots,\phi^K),$ the K-L divergence of state $m'$ from state $m$ is defined as
\begin{eqnarray} \label{KLvector1}
I(m,m';{\phi})=\sum_{k=1}^{K}I(m,m';{\phi^k})
\end{eqnarray}
and for a randomized quantizer vector $\bar{\phi}=\sum_j p^j \phi^{\cdot,j}$, the K-L divergence is a weighted average as in Section \ref{sec:pbfm}:
\begin{eqnarray} \label{KLvector2}
I(m,m';{\bar{\phi}})=\sum p^j I(m,m';{\phi^{\cdot,j}}).
\end{eqnarray}
Now the maximin quantizer vectors $\{\bar{\phi}^{\textrm{max}}_{m}\}$ and maximin information numbers $\{I(m)\}$ for quantizer vectors can be defined in exactly the same way as in Subsection \ref{subs:rdqntzInfoNmbr}, and the theories developed for single-sensor networks, i.e., Theorems \ref{the:asymp2stageproc}-\ref{the:MaximinQntzrApproxByULQ}, also hold for the multiple sensor cases except replacing the quantizers by quantizer vectors.

A special case is when the sensors are homogeneous, i.e., when the observations are independent and identically distributed among different sensors. In this case,  the maximin quantizer vectors are simply replicates of the maximin quantizers in the corresponding single-sensor case, and such results are summarized in the following proposition.

\begin{proposition}\label{prop:SrsIIDMxminVctrRpl}
Assume that $f_m^1=\dots=f^K_m=f_m$ for $m=0,1,\dots, M-1.$ Fix a state $m$, let $\bar{\phi}^{0,\textrm{max}}_m=\sum_j p^{j}_m\phi^{0,j}_m$ be the maximin quantizer in the corresponding single sensor case where the system has only one sensor and the raw data are distributed according to $\{f_m\}$. Define randomized quantizer vector $\bar{\phi}_m^{*} =\sum_j p^{j}_m \phi^{\cdot,j}_m$ with each $\phi^{\cdot,j}_m$ being a $K$-time replication of $\phi^{0,j}_m$, i.e., $\phi^{\cdot,j}_m=(\phi^{0,j}_m,\dots,\phi^{0,j}_m).$
Then $\bar{\phi}_m^{*}$ is a maximin quantizer vector for the state $m.$
\end{proposition}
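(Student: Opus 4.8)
The plan is to establish two matching bounds: first that the replicated quantizer vector $\bar{\phi}_m^{*}$ attains the value $K\,I_0(m)$, and second that no quantizer vector can exceed it, so that $\bar{\phi}_m^{*}$ maximizes $I(m;\cdot)$ over $\overline{\Phi^{(K)}}$. Throughout, write $I_0(m)=I(m;\bar{\phi}^{0,\textrm{max}}_m)$ for the single-sensor maximin information number, so that by hypothesis (and Theorem \ref{the:MaximinQntzrApproxByULQ}) $I_0(m)=\sup_{\bar{\phi}\in\bar{\Phi}}I(m;\bar{\phi})$, attained at $\bar{\phi}^{0,\textrm{max}}_m$.

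Achievability is a direct computation. Since all sensors share the common densities $\{f_m\}$, each replicated deterministic vector $\phi^{\cdot,j}_m=(\phi^{0,j}_m,\dots,\phi^{0,j}_m)$ satisfies $I(m,m';{\phi^{\cdot,j}_m})=K\,I(m,m';{\phi^{0,j}_m})$ by (\ref{KLvector1}), because every coordinate contributes the same single-sensor divergence. Averaging over $j$ with the weights $p^j_m$ via (\ref{KLvector2}) then gives $I(m,m';{\bar{\phi}_m^{*}})=K\,I(m,m';{\bar{\phi}^{0,\textrm{max}}_m})$ for every $m'\neq m$. Taking the minimum over $m'\neq m$ in the definition (\ref{equ:Imbarphimpmm}) and pulling out the constant $K$ yields $I(m;{\bar{\phi}_m^{*}})=K\,I_0(m)$.

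The substantive step is the matching upper bound: for every $\bar{\phi}=\sum_j p^j\phi^{\cdot,j}\in\overline{\Phi^{(K)}}$ one has $I(m;{\bar{\phi}})\le K\,I_0(m)$. First I would pass to the marginal quantizers, setting $\bar{\phi}^{k}=\sum_j p^j\phi^{k,j}$ for each sensor $k$; this is a legitimate element of $\bar{\Phi}$ precisely because homogeneity makes all $K$ coordinate quantizer spaces identical. Combining (\ref{KLvector2}), (\ref{KLvector1}) and the single-sensor definition (\ref{equ:inforndm}) gives the marginal identity $I(m,m';{\bar{\phi}})=\sum_{k=1}^{K}I(m,m';{\bar{\phi}^{k}})$. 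The key idea is then to collapse these $K$ marginals into a single equally weighted mixture $\bar{\psi}=\frac{1}{K}\sum_{k=1}^{K}\bar{\phi}^{k}\in\bar{\Phi}$, which is again a single-sensor quantizer. By the weighted-average nature of the divergence in (\ref{equ:inforndm}), $I(m,m';{\bar{\psi}})=\frac{1}{K}\sum_{k=1}^{K}I(m,m';{\bar{\phi}^{k}})=\frac{1}{K}I(m,m';{\bar{\phi}})$ for each $m'\neq m$; taking the minimum over $m'\neq m$ gives $I(m;{\bar{\psi}})=\frac{1}{K}I(m;{\bar{\phi}})$. Since $\bar{\psi}$ is a single-sensor quantizer, $I(m;{\bar{\psi}})\le I_0(m)$, whence $I(m;{\bar{\phi}})=K\,I(m;{\bar{\psi}})\le K\,I_0(m)$.

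Combining the two bounds shows $I(m;{\bar{\phi}_m^{*}})=K\,I_0(m)=\sup_{\bar{\phi}\in\overline{\Phi^{(K)}}}I(m;{\bar{\phi}})$, so $\bar{\phi}_m^{*}$ is a maximin quantizer vector for state $m$. The main obstacle is the upper bound, and specifically the averaging construction $\bar{\psi}=\frac{1}{K}\sum_k\bar{\phi}^{k}$. The naive attempt to bound each marginal separately through $\min_{m'\neq m}I(m,m';{\bar{\phi}^{k}})\le I_0(m)$ fails, since the minimizing $m'$ can differ across sensors and the per-sensor minima need not add up to the minimum of the sum. Averaging the marginals into one single-sensor quantizer is exactly what lets the single $\min_{m'\neq m}$ commute with the linear scaling by $1/K$, and this works only because the divergence of a randomized quantizer is the genuine weighted average (\ref{equ:inforndm}) rather than the plug-in quantity (\ref{equ:inforndma}).
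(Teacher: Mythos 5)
Your proof is correct. The paper's own proof is a single line --- ``follows at once from (\ref{KLvector1}) and (\ref{KLvector2})'' --- so your achievability computation is exactly what the paper has in mind, but your upper bound supplies a step the paper leaves entirely implicit. In particular, you correctly identify the one genuine subtlety: bounding each marginal $\bar{\phi}^{k}$ separately by $I_0(m)$ fails because the minimizing alternative $m'$ may differ across sensors, and the per-sensor minima need not sum to the minimum of the sum. Your fix --- collapsing the marginals into the equal-weight mixture $\bar{\psi}=\frac{1}{K}\sum_{k}\bar{\phi}^{k}$, which is a legitimate single-sensor randomized quantizer whose divergence is exactly $\frac{1}{K}I(m,m';\bar{\phi})$ under the weighted-average definition (\ref{equ:inforndm}), so that the single $\min_{m'\neq m}$ commutes with the scalar $1/K$ --- is the right construction and makes the argument airtight. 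This is a more careful rendering of the same result rather than a different route, and it also makes explicit why the proposition depends on the divergence being defined by (\ref{equ:inforndm}) rather than (\ref{equ:inforndma}).
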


\begin{proof}  The proof follows at once from (\ref{KLvector1}) and (\ref{KLvector2}).
\end{proof}

%  Fix two states $0\le m\neq m'\le M-1$. By definition, \[I(m,m';{\bar{\phi}_m})=K I(m,m';{\bar{\phi}^0_m}).\]
%Take any other randomized quantizer vector $\bar{\phi}=\sum p^j \phi^{\cdot,j}$ with each %$\phi^{\cdot,j}=(\phi^{1,j},\dots,\phi^{K,j})$. Define a randomized quantizer
%  \[\bar{\phi}^0=\sum_{j,k}\frac{1}{K}p^j \phi^{k,j}.\]
%Then $I(m,m';{\bar{\phi}})=K I(m,m';{\bar{\phi}^0})$.
%
%Since $\bar{\phi}^{0,\textrm{max}}_m$ is a maximin quantizer,
%\[\min_{m'\neq m} I(m,m';{\bar{\phi}^{0,\textrm{max}}_m})\ge \min_{m'\neq m} I(m,m';{\bar{\phi}^0}).\]
%So \[\min_{m'\neq m}I(m,m';{\bar{\phi}_m})\ge \min_{m'\neq m}I(m,m';{\bar{\phi}}),\]
%and $\bar{\phi}_m$ is a maximin quantizer vector for state $m$.
%\end{proof}

\subsection{Composite Multihypothesis Testing}\label{subs:compmulti}

Our theory can also be extended to the scenario of composite hypothesis with finitely many points. Suppose that there
are $B$ composite hypotheses, $\textbf{H}_0$,\dots, $\textbf{H}_{B-1},$ where
\begin{equation*}
%\tag{\ref{equ:Mhypo}'}
  \textbf{H}_{b}=\{\textbf{P}_{i_b}, \textbf{P}_{i_b+1},\dots, \textbf{P}_{i_{b+1}-1}\}
\end{equation*}
include $i_{b+1} - i_{b}$ points for $b=0,1,\dots,B-1,$ and $i_0=0.$ Without loss of generality, let us assume $M = i_{B}.$
Then there are a total of $i_B =M$ simple hypotheses, and the decision maker is required to
pick up one of the $B$ hypotheses that most likely includes the true state of nature $\textbf{P}_m.$
Hence, the problem formulation is the same as that in Section II, except that the cost function $W(m,m')$ needs to be re-defined to reflect composite hypotheses in the multihypothesis testing problem.
To simplify  our notation, for $m=0,1,\dots,M-1$, denote by $[m]$ the hypothesis that $\textbf{P}_m$ is in, i.e., $[m]=\textbf{H}_b$ if and only if $\textbf{P}_m\in \textbf{H}_b$. In composite multihypothesis testing problem, the loss function $W$ has the form $\{W(m,[m'])\}$, where $W(m, [m'])$ indicates the loss caused by making a decision $D=[m']$ when the states of nature is $\textbf{P}_m$. We assume $W(m,[m'])\ge 0$ and $W(m, [m'])=0$ if and only if $m\not\in [m']$, i.e., no loss in making a correct decision.

As in Section \ref{sec:pbfm}, the total expected cost or risk of a test $\delta$ when the true state of nature is $m$ is:
\begin{displaymath}
	\mathcal{R}_c(\delta; m)=c\textbf{E}_m\{N\} + \sum_{[m']}W(m,[m'])\textbf{P}_m\{D=[m']\}
\end{displaymath}
and the Bayes risk of $\delta$ is
\begin{equation}\label{equ:BysRskSeqTestComp}
  \mathcal{R}_c(\delta)=\sum_m\pi_m \mathcal{R}_c(\delta; m)
\end{equation}
where the prior probability of the hypothesis  $\textbf{H}_{b}$ is $\pi_{i_{b}} + \ldots + \pi_{i_{b+1}-1}.$

In the scenario of composite hypotheses, the definition of the two-stage tests is similar except a slight modification of the stopping time $N$ and the final decision $D$  of the fusion center in the second stage.  For simplicity, let us consider the simplest case of the single-sensor and binary sensor messages. At time step $n$ in the second stage, the fusion center computes
\begin{displaymath}
  r_{[m],n}=\sum_{m'\not\in [m]} \pi_{m',n} W(m',[m])
\end{displaymath}
which is the average loss if one makes  a final decision $D=[m].$ Then the fusion center stops at time $N=\min\{N_{[m]}\},$
where
\begin{equation*}
  N_{[m]}=\{n\ge N_0: r_{[m],n}\le c\}
\end{equation*}
and $N_0$ is the stopping time for the first stage.  When stopped, the fusion center makes a final decision $D=[m]$ if $N=N_{[m]}.$
Note that we do not change the fusion center policies in the first stage, i.e., the preliminary decision $D_0$ at the fusion center still picks up the most promising state among the $M$ states instead of picking up one of the $B$ hypotheses.

To find the asymptotically optimal tests among the two-stage tests, we need to modify the definition of the information number $I(m; \bar{\phi})$ as follows:
\begin{equation*}
I(m;{\bar{\phi}})=\min_{m'\not\in[m]}I(m,m';{\bar{\phi}}), \quad m=0,1,\dots, M-1
\end{equation*}
that is, when taking the minimum, we shall ignore those states grouped into the same hypothesis with $m$.
With these new definitions, Theorems \ref{the:asymp2stageproc} and \ref{the:asympoptdeltaIc} remain valid, and we can still use Theorem \ref{the:MaximinQntzrApproxByULQ} to numerically compute each maximin quantizer $\bar{\phi}^{\textrm{max}}_m$ by pretending $[m]=\{\textbf{P}_m\},$ i.e., by temporarily discarding other states in $[m].$

\section{Examples}\label{sec:Exp}

In this section we illustrate our theory via a numerical simulation study. Suppose we are interested in testing the mean of a normal distribution with unit variance in a network with a single sensor and binary sensor messages. That is, the raw data observed at the local sensor follows a normal distribution $\textbf{P}\sim N(\theta, 1).$ In the problem of testing three hypotheses regarding $\theta,$ say, $\textbf{H}_0: \theta = \theta_0,$ $\textbf{H}_1: \theta=\theta_1$ and $\textbf{H}_1: \theta=\theta_2,$ we assign the prior probability of $1/3$ to each of these three hypotheses, and as in Draglin et al. \cite{dtv2}, we also assume 0-1 loss for decision-making, i.e., $W(m,m')=1$ if $m\neq m'$ and $=0$ if $m=m'.$ Two different scenarios will be considered:
\begin{itemize}
  \item[1)] {\bf Asymmetric (HT1):}\ $(\theta_0, \theta_1, \theta_2) = (-0.5, 0, 1).$
  \item[2)] {\bf Symmetric (HT2):}\ $(\theta_0, \theta_1, \theta_2) = (-0.5, 0, 0.5).$
\end{itemize}

For our proposed asymptotic optimal decentralized test $\delta_{A}$ in these scenarios, it suffices to determine the local quantizers. The stationary quantizer in the first stage of $\delta_{A}$ is easy, as we can simply use $\phi^0(X) =I(X\ge 0),$ which satisfies the conditions in Subsection \ref{subs:2stage}. It is a little more challenging to  characterize the maximin quantizers used in the second stage of $\delta_{A}.$ For the asymmetric case (HT1), it is straightforward to show from Lemma \ref{lem:rdccomplexitesInSolveMmQntzr} that the three maximin quantizers are all deterministic MLRQs. Numerical computations illustrate that the three maximin quantizers are $\phi_0=I(X\ge -0.3963), \phi_1 = I(X \ge -0.1037), \phi_2 = I(X \ge 0.7941)$ and the corresponding maximin information numbers are $I_0 = 0.0796,   I_1 = 0.0796, I_2 = 0.3186,$ respectively.

The maximin quantizers of the symmetric case (HT2) are a little tricky. It is easy to check that Lemma \ref{lem:rdccomplexitesInSolveMmQntzr} can be applied to state $m=0$ and $m=2$, yielding two maximin quantizers $\phi_0 = I(X \ge -0.1037 )$ and $\phi_2 = I(X \ge 0.3963)$ with maximin information numbers $I_0=I_2=0.07959$. However, we need to pay special attention to the maximin quantizer for the state $m=1$ since the other two states $m=0$ and $m=2$ are symmetric with respect to $m=1$. Since the three pdfs are obviously linearly independent as defined in Subsection \ref{subs:SearchMaximinQntzr}, by Theorem \ref{the:MaximinQntzrApproxByULQ}, the maximin quantizer for state $m=1$ can be realized as a randomization of at most two ULQs. The following lemma, whose proof is straightforward and thus is omitted, gives more convenient descriptions of the ULQs in (HT2) when the observations are normally distributed.

\begin{lemma} \label{lem:ULQs4HT2}
For the symmetric case (HT2), up to a permutation of the values it takes, a ULQ always takes one of the following two forms:
$I(X \ge \lambda)$ or $I(\lambda_1 \le X\le \lambda_2),$
  where $\lambda$ and $\lambda_1\le \lambda_2$ are real numbers.
\end{lemma}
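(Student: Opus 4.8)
The plan is to exploit the special structure of three equally spaced Gaussian densities, which makes the linear combination $\sum_m a_m f_m$ collapse to a quadratic after an exponential change of variable. Write $f_m(x)=(2\pi)^{-1/2}\exp(-(x-\theta_m)^2/2)$ with $(\theta_0,\theta_1,\theta_2)=(-\tfrac12,0,\tfrac12)$. First I would factor out the strictly positive quantity $(2\pi)^{-1/2}e^{-x^2/2}$, so that for any reals $a_0,a_1,a_2$,
\[
\mathrm{sign}\Big(\sum_{m=0}^{2} a_m f_m(x)\Big)=\mathrm{sign}\big(c_0 e^{-x/2}+c_1+c_2 e^{x/2}\big),
\]
where $c_m=a_m e^{-\theta_m^2/2}$. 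The equal spacing is exactly what is needed: the three exponents are $-\tfrac12 x,\,0,\,\tfrac12 x$, so substituting the strictly increasing bijection $t=e^{x/2}\in(0,\infty)$ and multiplying through by $t>0$ rewrites the defining inequality of a ULQ as
\[
\phi(x)=I\big(q(t)>0\big),\qquad q(t)=c_2 t^2+c_1 t+c_0,\quad t=e^{x/2}.
\]

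Next I would analyze the sign of $q$ on the half-line $t>0$. Since $q$ has degree at most two, it has at most two real roots, so its sign changes at most twice on $(0,\infty)$, and the set $\{t>0:q(t)>0\}$ must be one of: the empty set, all of $(0,\infty)$, a single ray $(0,r)$ or $(r,\infty)$, a bounded interval $(r_1,r_2)$, or the complement $(0,r_1)\cup(r_2,\infty)$ of such an interval. A routine enumeration according to the sign of $c_2$ and the positions of the roots relative to the origin confirms that no other configuration can occur. Moreover the tie set $\{x:\sum_m a_m f_m(x)=0\}$ is the preimage of $\{t>0:q(t)=0\}$, which is finite unless $q\equiv 0$; since each $\mathbf{P}_m$ is absolutely continuous, condition (\ref{equ:dfnULQ2}) holds automatically for every non-degenerate $q$, and the constant-sign cases are the trivial (up to a null set) quantizers $\phi\equiv 0$ or $\phi\equiv 1$, which I would discard.

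Finally I would pull everything back through the increasing map $x\mapsto t=e^{x/2}$, which carries rays to rays and bounded intervals to bounded intervals. A ray $(r,\infty)$ in $t$ yields $\{x\ge\lambda\}$, a ray $(0,r)$ yields $\{x<\lambda\}$, a bounded interval yields $\{\lambda_1\le x\le\lambda_2\}$, and its complement yields $\{x<\lambda_1\}\cup\{x>\lambda_2\}$. Because the quantizer is binary, permuting its two output symbols replaces $\phi$ by $1-\phi$, which identifies $I(X<\lambda)$ with $I(X\ge\lambda)$ and the complement-of-interval indicator with $I(\lambda_1\le X\le\lambda_2)$. Hence, up to a permutation of its values, every ULQ has one of the two stated forms. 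The only genuine labor is the case bookkeeping of the second step, which I expect to be the main (though entirely routine) obstacle: one must check that the root positions never produce a positivity region on $(0,\infty)$ with more than two endpoints. This is immediate precisely because $q$ is quadratic, which is why the clean two-threshold conclusion is special to the symmetric configuration (HT2); for the asymmetric case (HT1) the same substitution produces a cubic and a third threshold can appear, so the lemma genuinely relies on the symmetry of the means.
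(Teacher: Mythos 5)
Your argument is correct, and since the paper explicitly omits the proof of this lemma as ``straightforward,'' there is no reference proof to diverge from; the exponential substitution $t=e^{x/2}$ reducing $\sum_m a_m f_m$ to a sign-equivalent quadratic in $t$ on $(0,\infty)$, followed by the enumeration of positivity sets of a quadratic on a half-line and the identification of complements via the permutation of output values, is surely the intended ``straightforward'' route. The only loose end is the constant quantizers $\phi\equiv 0,1$ (when $q$ has no sign change on $(0,\infty)$), which you discard but which are formally ULQs under Definition 3.2; this is harmless since, up to the permutation of values and $\textbf{P}_m$-null sets, $\phi\equiv 0$ coincides with $I(\lambda_1\le X\le \lambda_2)$ for $\lambda_1=\lambda_2$, so they still fall within the stated forms. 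Your closing observation that the equal spacing is essential --- the same substitution for (HT1) yields a cubic and hence a possible third threshold --- is accurate and explains why the lemma is stated only for the symmetric case.
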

This allows us to do numerical computation of the maximin quantizer for state $m=1$ as in Subsection \ref{subs:SearchMaximinQntzr}. Numerical computations turns out to show that  the maximin quantizer for state $m=1$ is also the deterministic quantizer defined by $\phi_1=I(X>0)$ up to the precision of 5 decimal digits, and $I_1=0.07928$.

%By Theorem \ref{the:asymp2stageproc},   for both (HT1) and (HT2),
%\begin{displaymath}
%  \textbf{P}_m(D\neq m)=O(c), \quad m=0,1,2
%\end{displaymath}
%as $c\to 0$.
%
%For the expected time steps, we have that for (HT1):
%\begin{eqnarray*}
%  \textbf{E}_0N&=& \textbf{E}_1N=-(1+o(1))\log c/0.07959,\\
%  \textbf{E}_2N&=& -(1+o(1))\log c/0.3186,
%\end{eqnarray*}
%while for (HT2):
%\begin{eqnarray*}
%  \textbf{E}_0N&=& \textbf{E}_2N=-(1+o(1))\log c/0.07959,\\
%  \textbf{E}_1N&=& -(1+o(1))\log c/0.07928.
%\end{eqnarray*}

%our method to two normal mean testing problems. The numerical simulation results are compared to {\color{red}the asymptotic lower bounds in (\ref{equ:BayesSmplSzs}) (not included yet) } as well as the numerical results reported in Dragalin et al. \cite{dtv2}, where the centralized multi-hypotheses problems are studied.

For each of two scenarios, (HT1) and (HT2), we will consider two versions of our proposed tests: one is $\delta_{A}(c)$ for the system with a single sensor, and the other is $\delta'_A(c)$ for the system with two independent and identical sensors. As a comparison of our proposed tests, we also consider an asymptotically optimal {\it centralized} test $\delta_a$ proposed in Draglin et al. \cite{dtv, dtv2}
for the system with a single sensor (we omitted another family of asymptotically optimal {\it centralized} test $\delta_b$ proposed in  Draglin et al. \cite{dtv, dtv2}, since its performance is similar to that of $\delta_{a}$). For $\delta_a$, the fusion center updates the posterior distribution $\{\pi_{m,n}\}$ based on the raw data $\{X_n\}$ and its stopping time is defined as $N(a)=\min_{1\le m\le M} N_m(a)$, where $N_m(a)=\inf\{n\ge 1: \pi_{m, n}\ge A_m\}$. In other words, $\delta_a$ stops as soon as one of the posterior probability $\pi_{m,n}$ passes the threshold $A_m$, which can take different values for different $m$. In the numerical simulation given in \cite{dtv2}, the values of these thresholds are as follows. For the asymmetric (\textbf{HT1}), $A_0=A_1=1-3.99\times 10^{-3}$, $A_2=1-5.33\times 10^{-3}$. For the symmetric (\textbf{HT2}), $A_0=A_1=A_2=1-3.99\times 10^{-3}$. These particular values for the thresholds tune the overall probabilities of making incorrect decisions with test $\delta_a$ to $1.0\pm 0.1 \times 10^{-3}$.

%The test $\delta_b$ is defined in a similar way as $\delta_a$. In particular, its overall probabilities of making incorrect decisions are also tuned to be $1.0\pm 0.1 \times 10^{-3}$. The details are omitted here.
%With $\delta_a$ and $\delta_b$, the fusion center has full access to the raw data and their stopping times are carefully designed so that not only their expected sample sizes, but also any positive moments of their stopping time distributions, are optimized asymptotically.\\
%Intuitively, the performance of $\delta_{A}(c)$ is worse than those of these three tests, $\delta_a, \delta_b,$ and  $\delta'_A(c).$ The purpose of comparison is to illustrate how much information will be lost for the decentralized test as compared to the centralized test, and whether a decentralized test with two sensors can outperform a centralized test with a single sensor.

In our simulations, the cost $c=3.6\times 10^{-3},$ and the threshold $u(c)$ at the first stage of  our proposed tests $\delta_{A}(c)$ and $\delta'_A(c)$ is set as $0.1.$ Because of the selection of the parameters,  $\delta_A$, $\delta'_A,$ and $\delta_a$ have similar probabilities of making incorrect decisions, i.e., $1.0\pm 0.1 \times 10^{-3}.$ Thus it suffices to report  the simulated expected time steps $\textbf{E}_m\left\{ N \right\}$ under each of the three hypotheses $\textbf{H}_{m}$ for $m=0,1,2,$ as smaller values of  $\textbf{E}_m\{N\}$ imply better performance of the test (in the sense of smaller Bayes risks). These results are reported in Table \ref{tab:sampleSizecmp}.

\begin{table}
  \centering
  \caption{Expected values of time steps taken for each of the three tests.} \label{tab:sampleSizecmp}

\begin{tabular}{|c||c||c|c|c|}
\hline
& $\textbf{E}_m(N)$ & $\delta_a$ & $\delta_A(c)$ & $\delta'_A(c)$  \\
\hline
\hline
& $m=0$ & 46.48  & 73.5$\pm$0.9& 36.8$\pm$0.7\\
	 \cline{2-5}
Asymmetric (HT1) &$m=1$ & 48.39 & 77.7$\pm$0.9& 38.9$\pm$0.7\\
	 \cline{2-5}
&$m=2$ & 11.90 & 19.8$\pm$0.2& 9.9$\pm$0.1\\
	 \hline \hline
&$m=0$ & 46.59  & 73.4$\pm$0.9& 37.8$\pm$0.6\\
	 \cline{2-5}
Symmetric (HT2) &$m=1$ & 69.43 & 110.2$\pm$0.9& 55.2$\pm$0.7\\
	 \cline{2-5}
&$m=2$ & 46.60 & 73.4$\pm$0.9& 37.8$\pm$0.6\\  \hline
\end{tabular}

%\begin{tabular}{|c||c||c|c|c|c|}
%\hline
%& $\textbf{E}_m(N)$ & $\delta_a$ & $\delta_b$ & $\delta_A(c)$ & $\delta'_A(c)$  \\
%\hline
%\hline
%& $m=0$ & 46.48 & 46.47 & 73.5$\pm$0.9& 36.8$\pm$0.7\\	 \cline{2-6}
%Asymmetric (HT1) &$m=1$ & 48.39 & 48.03 & 77.7$\pm$0.9& 38.9$\pm$0.7\\ \cline{2-6}
%&$m=2$ & 11.90 & 11.89 & 19.8$\pm$0.2& 9.9$\pm$0.1\\ 	 \hline \hline
%&$m=0$ & 46.59 & 46.61 & 73.4$\pm$0.9& 37.8$\pm$0.6\\ 	 \cline{2-6}
%Symmetric (HT2) &$m=1$ & 69.43 & 71.67 & 110.2$\pm$0.9& 55.2$\pm$0.7\\ \cline{2-6}
%&$m=2$ & 46.60 & 46.64 & 73.4$\pm$0.9& 37.8$\pm$0.6\\  \hline
%\end{tabular}

\end{table}

The numerical results illustrate that the centralized test, $\delta_{a},$ indeed performs better than the decentralized test $\delta_{A}(c)$ that makes a final decision based on binary sensor messages instead of raw normal observations. However, Table \ref{tab:sampleSizecmp} demonstrates that for (HT1) and (HT2), if we are able to deploy merely one extra identical sensor, the decentralized test $\delta'_{A}(c)$  has smaller Bayes risk than the centralized test with a single sensor, not to mention other important benefits such as robustness and bandwidth saving capabilities. In other words, if designed appropriately, a decentralized test does not lose much information as compared to the centralized test, and in fact, a decentralized test with two sensors can outperform a centralized test with a single sensor.

\section{Conclusion} \label{sec:conclusion}

We have developed a family of  asymptotically optimal decentralized sequential tests when testing $M \ge 3$ hypotheses.
The main idea is to consider ``two-stage'' tests in which one first uses a small portion of total time steps to make a preliminary decision of the true state of nature, and then the local quantizer switches to the corresponding ``maximin quantizers.'' Moreover, we show that each maximin quantizer can be realized (or approximated) as a randomization of at most $M-1$ ULQs, and we also illustrate how to compute maximin quantizers numerically.

There are several theoretical issues in sequential multihypothesis testing problems that deserve further research. Instead of first-order optimality, it will be interesting to investigate higher-order asymptotic optimality. It is expected that we need to extend our two-stage test $\delta_{A}(c)$ to more than two stages in order to achieve higher-order asymptotic optimality. In addition, it is interesting to see what happens if the sensor observations are no longer i.i.d., especially if they are dependent either over time or among different sensors.

\appendices

\renewcommand{\thelemma}{\thesection.\arabic{lemma}}

\section{Proofs of Theorems \ref{the:MaximinQntzrApproxByULQ} and \ref{the:MaximinULQmultialphabets}}\label{app:qntzr}

Since quantizers, especially randomized quantizers, play an important role in our theorems,  we will gather some useful results for quantizers in this appendix, including the proofs of Theorems \ref{the:MaximinQntzrApproxByULQ} and \ref{the:MaximinULQmultialphabets}.  Without loss of generality, we assume that the quantized messages belong to a finite alphabet, say, $\{0,1,\ldots, l-1\}.$
For a (deterministic or randomized) quantizer $\bar\phi \in \bar\Phi$, define  its distribution vector as a vector of $Ml$ dimensions:
\[
q(\bar\phi)=(q(i; m,\bar\phi))_{\ 0\le i\le l-1;\ 0\le m\le M-1}
\]
where $q(i; m, \bar\phi)=\textbf{P}_m(\bar\phi(X)=i).$ Now let us consider four subspaces induced by the distribution vectors $q(\bar\phi):$
\begin{itemize}
  \item Let $Q$ be the set formed by the distribution vectors of all \textit{deterministic} quantizers, i.e., $Q=\{q(\phi):\phi\in\Phi\};$
  \item Let $\bar Q=\{q(\bar\phi): \bar\phi\in\bar\Phi\}$ be the set formed by the distribution vectors of all quantizers, deterministic or random;
  \item Denote by $Q_U\subset Q$ the set of distribution vectors of all ULQs (see Definition \ref{def:ULQGeneral});
  \item Denote by $Q_\alpha$ the set of extreme points of $\bar{Q}$.
\end{itemize}
By Tsitsiklis \cite{tsi}, $Q$ is compact and $\bar Q$ is the compact convex hull of $Q.$  By the Krein-Milman theorem, the compact convex set $\bar Q$ is also the convex hull of its extreme points. Thus it is useful to characterize $Q_\alpha.$ Tsitsiklis \cite{tsi} showed that $Q_U \subset Q_\alpha \subset Q,$ and $Q_{U}$ is a dense subset of $Q_\alpha.$ Moreover, it also studied in detail the case of
testing $M=2$ hypotheses. However, the case of $M \ge 3$ hypotheses is more challenging.  Fortunately, below we are able to show that $Q_\alpha = Q_{U}$ for $M \ge 3$ hypotheses under some reasonable additional assumptions.

\begin{lemma} \label{lem:ExpisExt}
If the sensor messages are binary (i.e., $l=2$) and the pdf's $\{f_0,\dots,f_{M-1}\}$ are linearly independent (as defined in Subsection \ref{subs:SearchMaximinQntzr}), then $Q_\alpha=Q_U.$
\end{lemma}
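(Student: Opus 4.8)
The plan is to exploit the two facts about binary quantizers established by Tsitsiklis \cite{tsi} and quoted just above the statement: $Q_U \subseteq Q_\alpha$ and $Q_U$ is dense in $Q_\alpha$. The density says precisely that $Q_\alpha \subseteq \overline{Q_U}$, so combined with the inclusion $Q_U \subseteq Q_\alpha$ the lemma reduces to a single topological assertion: \emph{the set $Q_U$ is closed}. Indeed, if $Q_U$ is closed then $\overline{Q_U}=Q_U$, whence $Q_\alpha \subseteq \overline{Q_U}=Q_U \subseteq Q_\alpha$ and equality follows. Thus the entire content of the proof is to show that, under the two hypotheses $l=2$ and linear independence, $Q_U$ is in fact compact.

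First I would set up a finite-dimensional parametrization of the binary ULQs. When $l=2$, Definition \ref{def:ULQ} says every ULQ has the form $\phi_a(X)=I\!\left(\sum_{m=0}^{M-1} a_m f_m(X)>0\right)$ for some $a=(a_0,\dots,a_{M-1})$, and the quantizer is unchanged when $a$ is multiplied by a positive scalar. Moreover, under linear independence the tie condition (\ref{equ:dfnULQ2}) holds automatically for every $a\neq 0$. Hence every ULQ is realized by some $a$ on the unit sphere $S^{M-1}=\{a:\sum_m a_m^2=1\}$, and conversely every $a\in S^{M-1}$ yields a legitimate ULQ, so that $Q_U=\{q(\phi_a):a\in S^{M-1}\}$. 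Since $S^{M-1}$ is compact, it suffices to prove that the map $a\mapsto q(\phi_a)$ is continuous on $S^{M-1}$; its image $Q_U$ is then compact, and therefore closed.

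The continuity of $a\mapsto q(\phi_a)$ is the crux, and it is exactly here that linear independence is used. Fix $m$; it is enough to treat the coordinate $q(1;m,\phi_a)=\int I\!\left(\sum_k a_k f_k(x)>0\right) f_m(x)\,d\mu(x)$, since $q(0;m,\phi_a)$ is its complement. Let $a_n\to a$ in $S^{M-1}$. For every $x$ with $\sum_k a_k f_k(x)\neq 0$, the strict sign of $\sum_k a_{n,k}f_k(x)$ agrees with that of $\sum_k a_k f_k(x)$ for all large $n$, so the integrand converges pointwise at such $x$. By linear independence the exceptional set $\{x:\sum_k a_k f_k(x)=0\}$ has $\textbf{P}_m$-measure zero, i.e.\ $\mu$-measure zero on $\{f_m>0\}$, so the integrands converge $\mu$-a.e.; being dominated by $f_m\in L^1(\mu)$, the dominated convergence theorem gives $q(1;m,\phi_{a_n})\to q(1;m,\phi_a)$. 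This establishes continuity, and the lemma follows from the reduction above.

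I expect the main obstacle to be nothing deeper than making this continuity step airtight, in particular verifying that the measure-zero tie set is precisely what decouples the pointwise convergence of the indicators from the integration, and confirming that the sphere parametrization genuinely exhausts $Q_U$ (including the degenerate constant quantizers obtained from all-positive or all-negative $a$). It is worth emphasizing that the argument is special to $l=2$: for $l\ge 3$ the ULQs are parametrized by an $l\times M$ matrix through $\argmin_i\sum_m a_{i,m}f_m$, and the analogous image need not be closed, which is consistent with the fact that Theorem \ref{the:MaximinULQmultialphabets} only guarantees approximation rather than exact attainment in that regime.
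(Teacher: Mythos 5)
Your proposal is correct and follows essentially the same route as the paper: the paper likewise reduces the claim (via density of $Q_U$ in $Q_\alpha$) to showing that any limit of ULQ distribution vectors is itself the distribution vector of a ULQ, extracts a convergent subsequence of the unit-sphere coefficients by Bolzano--Weierstrass, and invokes linear independence to make the tie set $\{x:\sum_m a^0_m f_m(x)=0\}$ null so that the disagreement sets have vanishing probability under every $\textbf{P}_m$. The only cosmetic difference is that you verify the continuity of $a\mapsto q(\phi_a)$ by pointwise convergence of the indicators plus dominated convergence, whereas the paper does the same estimate by explicitly trapping the disagreement set $\Delta_j$ inside a shrinking neighborhood $\Delta'_j$ of the tie set.
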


\begin{proof} Since  $Q_U$ is a dense subset of $Q_\alpha,$ it is sufficient to show that if $q^0 \in Q_{\alpha},$ then $q^0 \in Q_{U}.$ Since $Q_{U}$ is dense in $Q_\alpha,$ there is a sequence of ULQs $\phi^j,$ say, $\phi^j=I(\sum_m a^j_m f_m(X)>0)$ with $\sum_m (a^j_m)^2 =1,$ such that $q(\phi^j) \to q^0.$ By Bolzano-Weierstrass theorem, each bounded sequence has a convergent subsequence. By passing to subsequences, we can simply assume that $a^{j}_{m}$ converges to $a^{0}_{m}$ for each state $m,$ and so $\sum_m (a^0_m)^2 =1$. By the condition of linear independence, $\phi^0(X)=I(\sum_m a^0_m f_m(X)>0)$ is a ULQ. It remains to show that $q^0=q(\phi^0)$, or equivalently,  to show that for each state $m$,
$\lim_{j\to\infty}\textbf{P}_m(\Delta_j) = 0,$
where $\Delta_j=\Delta_j^1\cup \Delta_j^2,$ and
  \begin{displaymath}
	\Delta_j^1=\{X: \sum_m a_m^j f_m(X)\le 0 \quad \mbox{and} \quad \sum_m a_m^0 f_m(X)>0\}
  \end{displaymath}
  and
  \begin{displaymath}
	\Delta_j^2=\{X: \sum_m a_m^j f_m(X)> 0 \quad \mbox{and} \quad  \sum_m a_m^0 f_m(X)\le 0\}.
  \end{displaymath}

To prove this, without loss of generality,  let us further assume that $0 \le f_m(X)\le 1$ for any state $m,$
as we can always substitute $f_m(X)$ by ${f_m(X)}/{\sum_{m'} f_{m'}(X)}.$ Define
another sequence of sets $\{\Delta'_j\}$ by $\Delta'_j=\{X: |\sum_m a_m^0 f_m(X)|\le M\varepsilon_j\},$
where $\epsilon_{j} = \max_{m} |a^{0}_{m} - a^{j}_{m}|.$ We claim that $\Delta_j\subset \Delta'_j$ for each $j$. Indeed, if $X\in\Delta^1_j$, then $\sum_m a_m^j f_m(X) \le 0$ and
 \begin{eqnarray*}
	\sum_m a_m^0 f_m(X) &\le& \sum_m (a_m^0-a_m^j)f_m(X)\\
&\le& \sum_m |a_m^0-a_m^j| = M\varepsilon_j
  \end{eqnarray*}
where the second inequality uses the assumption that $0 \le f_m(X)\le 1.$  Moreover, if $X\in\Delta^1_j$, then $\sum_m a_m^0 f_m(X) > 0,$ and thus $\Delta^1_j\subset \Delta'_j.$ Similarly, $\Delta^2_j\subset\Delta'_j.$ So $\Delta_j\subset\Delta'_j$.

Let $\Delta^0=\bigcap_{i=1}^\infty \bigcup_{j=i}^\infty \Delta'_j$. Since $a^{j}_{m}$ converges to $a^{0}_{m}$ for each state $m,$
we have $\varepsilon_j= \max_{m} |a^{0}_{m} - a^{j}_{m}| \to 0,$ and thus $\Delta^0=\{X: \sum_m a_m^0 f_m(X)=0\}$. Because the pdf's are assumed to be linearly independent, $\textbf{P}_m(\Delta^0)=0$ for any state $m$. Hence, $\lim_{j \rightarrow \infty} \textbf{P}_m(\Delta'_j)=0.$ So $\lim_{j \rightarrow \infty} \textbf{P}_m(\Delta_j)=0,$ and the lemma is proved.
\end{proof}

\medskip
Now let us consider the K-L divergences for distribution vectors of quantizers. % as in Tsitsiklis \cite{tsi}.
Given $q\in \bar Q$, say, $q=q(\bar\phi)$, denote $q_{i,m}=q(i; m, \bar\phi)$, where $i=0,\dots,l-1$ and $m=0,\dots,M-1$. For $0\le m\neq m'\le M-1,$ define the K-L divergence of the distribution vector $q$ of state $m'$ from state $m$ by
\begin{equation}\label{equ:Immpmq}
	J(m,m'; q)=\sum_{i=0}^{l-1} q_{i,m}\log\frac{q_{i,m}}{q_{i,m'}}
\end{equation}
where as conventional $0\log\frac{0}{0}=0.$

% as in Tsitsiklis \cite{tsi}. A special attention goes to the case when $q=q(\phi)$ is the distribution vector of a randomized quantizer $\phi\in \bar \Phi.$
On the one hand, the definition of $J(m,m'; q)$ is standard and Tsitsiklis \cite{tsi} showed that under Assumption \ref{ass:KLInfoLimit}, for any two states $m\neq m'$, the K-L divergence   $J(m,m';{q})$  is bounded, continuous, and convex as a function of $q\in \bar{Q}.$ On the other hand, for a randomized quantizer $\bar\phi,$ the definition of  $J(m,m'; q(\bar\phi))$ is equivalent to  the K-L divergence defined in (\ref{equ:inforndma}), not that in (\ref{equ:inforndm}). Indeed, $J(m,m'; q(\bar\phi)) \le I(m,m'; \bar \phi)$ in (\ref{equ:inforndm}) and thus it does not directly relate to the maxmin information number $I(m)$ in Definition \ref{def:maximin-info}. Fortunately, the idea can be salvaged. To do so, let $\bar{\mathcal{M}}$ be the set of Borel probability measures on $\bar{Q}$, for each $\mu\in\bar{\mathcal{M}}$ and two states $0\le m\neq m'\le M-1$ define
\begin{equation}\label{equ:ImmprimMu}
J^{*}(m,m'; \mu)=\int_{\bar Q} J(m,m'; q)d\mu(q)
\end{equation}
and
\begin{equation}\label{equ:ImMu}
J^{*}(m; \mu)=\min_{m'\neq m} J^{*}(m, m'; \mu).
\end{equation}
Then for a randomized quantizer $\bar{\phi}\in\bar{\Phi},$ the K-L divergence defined in (\ref{equ:inforndm}) is equivalent to $J^{*}(m,m'; \mu)$ for some suitably chosen $\mu.$ To see this, note that $\bar{\phi}$ assigns probability masses to a finite or  countable subset of $\Phi,$ and thus induces a probability measure $\mu(\bar{\phi})$ on $Q.$  Hence, $I(m,m'; \bar{\phi})=J^*(m,m'; \mu(\bar{\phi}))$ and
\begin{equation}\label{equ:ImbarphiJ*}
I(m; \bar{\phi})=J^*(m; \mu(\bar{\phi})).
\end{equation}

%To see this, note that Meanwhile, for each $\mu\in\mathcal{M}^0$ one can always find $\bar{\phi}\in\bar{\Phi}$ such that $\mu=\mu(\bar{\phi}),$ and thus $\bar{\Phi}$ can be identified with $\mathcal{M}^0$ through the map $\bar{\phi}\to\mu(\bar{\phi})$.  It's straightforward to see that $I(m,m'; \bar{\phi})=J^*(m,m'; \mu(\bar{\phi}))$ and

%Also define $\mathcal{M}\subset\bar{\mathcal{M}}$ as the set of probability measures supported on $Q$, and $\mathcal{M}^0$ as such measures with at most countable supports. So $\mathcal{M}^0\subset\mathcal{M}$.

%The importance of such definition is that for a deterministic quantizer, its distribution vector fully described the distribution of the sensor messages compressed by it. In particular, two deterministic quantizers with the same distribution vector are statistically indistinguishable when applied into any procedures. However, this is in general not true for a (genuine) randomized quantizer, because when the randomization is done at the central level, the distribution vector of a randomized quantizer does not reflect which deterministic quantizer is picked through randomization.
%, then by definition, the K-L divergence for $q$ is identical with that for $\phi$, i.e., $J(m, m' ; q(\phi))=I(m, m' ; \phi)$. However, this relation does not hold for general randomized quantizers. For general $\bar{\phi}\in\bar{\Phi}$, by Jensen inequality, $I(m,m'; \bar \phi)\ge J(m,m'; q(\bar \phi)).$

Our next result is to provide an alternative representation of the maximin information number $I(m)$ defined in Definition \ref{def:maximin-info} in Subsection \ref{subs:rdqntzInfoNmbr}.
\begin{lemma} \label{lem:ImmuSupIsIm}
The maximin information number $I(m) = \sup_{\mu\in\mathcal{M}}J^*(m; \mu)=\sup_{\mu\in\bar{\mathcal{M}}}J^*(m; \mu),$ where  $\mathcal{M}\subset\bar{\mathcal{M}}$ is the set of probability measures supported on $Q.$
\end{lemma}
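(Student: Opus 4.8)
The plan is to prove the two claimed equalities separately, in each case reducing to finitely supported measures where the structure is transparent. First I would record the link between quantizers and measures on $Q$: by the identity \ref{equ:ImbarphiJ*}, every $\bar\phi\in\bar\Phi$ satisfies $I(m;\bar\phi)=J^*(m;\mu(\bar\phi))$, where $\mu(\bar\phi)$ is the atomic probability measure on $Q$ that places mass $p^j$ at $q(\phi^j)$. Conversely, any probability measure $\mu=\sum_j p^j\delta_{q^j}$ supported on countably many points of $Q$ is realized as $\mu(\bar\phi)$ by picking deterministic quantizers $\phi^j$ with $q(\phi^j)=q^j$ and setting $\bar\phi=\sum_j p^j\phi^j$. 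Hence, by Definition \ref{def:maximin-info},
\[
I(m)=\sup_{\bar\phi\in\bar\Phi}I(m;\bar\phi)=\sup\{J^*(m;\mu):\mu\text{ atomic on }Q\},
\]
so it remains to upgrade this supremum, first to all of $\mathcal{M}$ and then to all of $\bar{\mathcal{M}}$.

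The engine for the first upgrade is weak continuity. Since $J(m,m';\cdot)$ is bounded and continuous on the compact set $\bar Q$ (Tsitsiklis, under Assumption \ref{ass:KLInfoLimit}), the functional $\mu\mapsto J^*(m,m';\mu)=\int_{\bar Q}J(m,m';q)\,d\mu(q)$ is continuous for weak convergence, and $J^*(m;\mu)=\min_{m'\neq m}J^*(m,m';\mu)$, being a finite minimum of such functionals, is also weakly continuous. Because finitely supported measures are weakly dense among Borel probability measures on a compact metric space, applying this density on $Q$ identifies $\sup_{\mu\in\mathcal{M}}J^*(m;\mu)$ with the supremum over atomic measures on $Q$, which equals $I(m)$; this is the first equality. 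The same density statement on $\bar Q$ shows $\sup_{\mu\in\bar{\mathcal{M}}}J^*(m;\mu)$ is approached by finitely supported measures on $\bar Q$, so for the second equality it suffices to compare such measures against $\mathcal{M}$.

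For the second equality the inclusion $\mathcal{M}\subset\bar{\mathcal{M}}$ gives $\sup_{\mathcal{M}}\le\sup_{\bar{\mathcal{M}}}$ immediately, so only the reverse inequality needs work. Given finitely supported $\mu=\sum_j p^j\delta_{q^j}$ on $\bar Q$, I would use that $\bar Q$ is the compact convex hull of the compact set $Q$ inside the finite-dimensional space $\mathbb{R}^{Ml}$: by Carath\'eodory's theorem each atom decomposes as a finite convex combination $q^j=\sum_k\lambda^j_k q^{j,k}$ with $q^{j,k}\in Q$. Setting $\nu=\sum_{j,k}p^j\lambda^j_k\,\delta_{q^{j,k}}\in\mathcal{M}$ and applying the convexity of $J(m,m';\cdot)$ (Jensen's inequality) gives $J^*(m,m';\nu)\ge J^*(m,m';\mu)$ for each $m'\neq m$; taking the minimum over $m'$ yields $J^*(m;\nu)\ge J^*(m;\mu)$. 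Thus $\sup_{\mathcal{M}}J^*(m;\cdot)$ dominates $J^*(m;\mu)$ for every finitely supported $\mu$ on $\bar Q$, hence dominates $\sup_{\bar{\mathcal{M}}}J^*(m;\cdot)$ by the density reduction above, completing the proof.

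The step I expect to be the crux is this passage from $\bar Q$ to $Q$. The naive route---disintegrating each point of $\bar Q$ into a barycentric representation over $Q$ and integrating against $\mu$---would force one to exhibit a \emph{measurable} family of representing measures, i.e.\ a Choquet-type measurable selection, which is delicate for general $\mu$. The reduction to finitely supported measures sidesteps this: for finitely many atoms the representing combinations can be chosen one at a time by Carath\'eodory, and convexity of the K-L divergence then finishes the job with no measurability overhead. Verifying the weak continuity of $J^*(m;\cdot)$ and the finite-support density are routine, so the whole argument hinges on coupling that reduction with Jensen's inequality.
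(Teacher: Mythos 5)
Your proof is correct and follows essentially the same route as the paper's: the sandwich inequality $I(m)\le\sup_{\mathcal{M}}\le\sup_{\bar{\mathcal{M}}}$, weak continuity of $J^*(m;\cdot)$ plus density of finitely/countably supported measures, and then a pointwise domination of measures on $\bar Q$ by measures on $Q$ via convexity of $J(m,m';\cdot)$. The only cosmetic difference is that you invoke Carath\'eodory's theorem to decompose atoms of $\bar Q$ into convex combinations of points of $Q$, whereas the paper identifies each $q\in\bar Q$ with a randomized quantizer $\bar\phi$ and uses the already-recorded inequality $J(m,m';q(\bar\phi))\le I(m,m';\bar\phi)=J^*(m,m';\mu(\bar\phi))$ --- the same convexity argument in different packaging.
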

\begin{proof} Denote  by $\mathcal{M}^0$ and  $\bar{\mathcal{M}}^0$ the set of probability measures on $Q$ and $\bar Q$ that have at most countable supports, respectively. By (\ref{equ:ImbarphiJ*}), $\sup_{\mu\in\mathcal{M}^0}J^*(m; \mu)= I(m),$ and thus
\[ I(m)\le \sup_{\mu\in\mathcal{M}}J^*(m; \mu)\le\sup_{\mu\in\bar{\mathcal{M}}}J^*(m; \mu).\]
%Hence, it suffices to show that $I(m) \ge \sup_{\mu\in\bar{\mathcal{M}}}J^*(m; \mu)$.
%To prove this, we define $\bar{\mathcal{M}}^0$ as the set of probability measures on $\bar{Q}$ which have at most countable supports, and show
By Tsitsiklis \cite{tsi}, $J(m,m'; q)$ is bounded and continuous  as a function of $q\in \bar Q$. Hence $J^*(m, m'; \mu)$ and $J^*(m; \mu)$ are also continuous viewed as functions of $\mu\in\bar{\mathcal{M}}$ (under weak-convergence). Thus the lemma follows at once from the denseness of
$\mathcal{M}^0$ (or $\bar{\mathcal{M}}^0$)  in $\mathcal{M}$ (or $\bar{\mathcal{M}}$), provided that $I(m)\ge\sup_{\mu\in\bar{\mathcal{M}}^0}J^*(m; \mu).$ Hence, it suffices to show that for each $\mu\in\bar{\mathcal{M}}^0$, there exists a $\mu'\in\mathcal{M}^0$ such that $J^*(m, m'; \mu)\le J^*(m, m'; \mu')$ for each $m'\neq m$. By linearity, we only need to prove it under the further assumption that $\mu\in\bar{\mathcal{M}}^0$ is supported on a single point $q =q(\bar{\phi})$ for a randomized quantizer $\bar{\phi}\in\bar\Phi.$ In this case $J^*(m, m'; \mu) = J(m, m'; q) \le I(m, m'; \bar\phi)$.  By our previous argument, $\bar{\phi}$ can be identified to a probability measure $\mu'=\mu(\bar \phi)\in \mathcal{M}^0$ with the property $I(m, m'; \bar\phi)=J^*(m,m'; \mu')$. Therefore $J^*(m,m'; \mu) \le J^{*}(m,m'; \mu'),$ completing the proof of the lemma.
\end{proof}

\medskip

\medskip
Finally, we are in a position to prove Theorems \ref{the:MaximinQntzrApproxByULQ} and \ref{the:MaximinULQmultialphabets}.

\begin{proof}[Proofs of Theorem \ref{the:MaximinQntzrApproxByULQ} and Theorem \ref{the:MaximinULQmultialphabets}]
Note that Theorem \ref{the:MaximinQntzrApproxByULQ} is a special case of Theorem \ref{the:MaximinULQmultialphabets}, and follows at once from Theorem \ref{the:MaximinULQmultialphabets} and Lemma \ref{lem:ExpisExt} under the assumption of binary sensor messages and linearly independent pdf's in which $Q_U=Q_\alpha.$ By symmetry and the fact that $Q_U$ is a dense subset in $Q_\alpha$, it is sufficient to show that under the assumption of Theorem \ref{the:MaximinULQmultialphabets}, for the state $m=0$, exists one maximin quantizer which is a randomization of at most $M-1$ quantizers with their distribution vectors in $Q_\alpha$.

Define two sets in $M-1$ dimensional space,
$\mathscr{I} = \{(J(0,1; q), \dots, J(0, M-1; q) ): q\in Q\},$ and $\mathscr{I}_{\alpha}= \{(J(0,1; q), \dots, J(0, M-1; q) ): q\in Q_\alpha\}.$ Define the same for $\mathscr{I}^{*}$ and $\mathscr{I}_{\alpha}^{*}$ when $J(0,m;q)$ is replaced by $J^{*}(0,m; \mu)$ with $\mu\in\mathcal{M}$ and $\mu \in \mathcal{M}_\alpha,$ respectively, where $\mathcal{M}_\alpha$ is the set of probability measures supported in $Q_\alpha.$ As we have mentioned earlier, $J(0, m; q)$ is continuous if viewed as a function of $q\in Q$, so both $\mathscr{I}$ and $\mathscr{I}_\alpha$ are compact. Obviously, $\mathscr{I}^*$ and $\mathscr{I}^*_\alpha$ are convex hulls of $\mathscr{I}$ and $\mathscr{I}_\alpha$, so they are compact as well. The main idea of the proof is to relate the maximin information number $I(0)$ with the set  $\mathscr{I}^{*}_{\alpha}.$

First, we claim that $I(0) = \sup_{J\in \mathscr{I}^*_\alpha} h(J),$ where $h(\cdot)$ is a function on $M-1$ dimensional space defined by $h(x_1,\dots, x_{M-1})=\min\{x_1,\dots,x_{M-1}\}.$  By Lemma \ref{lem:ImmuSupIsIm}, we have $I(0)=\sup_{J\in \mathscr{I}^*} h(J).$
Since $\mathscr{I}^*_\alpha\subset\mathscr{I}^*$, to prove the claim, we only need to show, for any $J\in\mathscr{I}^*$, there exists $J'\in\mathscr{I}^*_\alpha$, such that each component of $J$ is less or equal to the corresponding component of $J'$. By linearity, it is sufficient to prove for $J\in\mathscr{I},$ say, $J=(J(0,1; q),\dots,J(0,M-1; q))$ for some $q\in Q$. Decompose $q$ as a convex combination of points in $Q_\alpha$: $q=\sum p^j q^j$, then
  \begin{displaymath}
	J(m, m'; q)\le \sum p^j J(m, m'; q^j), \quad 0\le m\neq m'\le M-1.
  \end{displaymath}
  Let $J'=(J^*(0,1; \mu),\dots,J^*(0,M-1; \mu))$ with $\mu$ assigns probability mass $p^j$ to $q^j$ for each $j$, and our claim is justified.

Second, we will show that
\begin{equation}\label{equ:JhJ}
\sup_{J\in\mathscr{I}^*_\alpha} h(J) = \min_{1\le m\le M-1}J^*(0,m; \mu_0)
\end{equation}
for a probability $\mu_0\in \mathcal{M}_\alpha$ whose support includes at most $M-1$ points.
To see this,
%In other words, $h(\cdot)$ attains its maximum on the point $(J^*(0,1; \mu_0),\dots,J^*(0,M-1; \mu_0))\in\mathscr{I}^*_\alpha$, e.g.,
%In fact,
note that  $\mathscr{I}^*_\alpha$ is a compact convex subset in $M-1$ dimensional space. Thus $h(\cdot)$ attains its maximum at a point $\tilde{J}$ on the surface of $\mathscr{I}^*_\alpha$ and $\tilde{J}$ can be realized as a convex combination of at most $M-1$ points in $\mathscr{I}^*_\alpha,$ see, for example, Hormander \cite{hor}. Suppose that $\tilde{J}=\sum_{j=1}^{M-1} p_0^j J^j$, where $\sum p_0^j=1$ and $J^j\in\mathscr{I}^*_\alpha$. For each $j$, let $J^j=(J(0,1; q^j_0),\dots,J(0,M-1; q^j_0))$, with $q^j_0\in Q_\alpha$. Define $\mu_0\in\mathcal{M}_\alpha$ be a probability measure such that $\mu_0(q^j_0)=p^j_0$, for $j=1,\dots, M-1$, then (\ref{equ:JhJ}) holds.

Finally, define the randomized quantizer $\bar{\phi}_0$ as the one induced by the measure $\mu_0$ in (\ref{equ:JhJ}). Then $I(0)=\min_{m\neq 0}\{I(0,m; \bar{\phi}_0)\}$ and $\bar{\phi}_0$ can be rewritten as $\sum_{j=1}^{M-1}p_0^j \phi_0^j$ where $\phi_0^j$ has $q_0^j$ as its distribution vector. Equivalently, $\bar{\phi}_0$ is just the maximin quantizer $\bar{\phi}^{\textrm{max}}_0,$ and it can be taken as a randomization of at most $M-1$ quantizers with their distribution vectors in $Q_\alpha$. This completes our proof.
%of Theorem \ref{the:MaximinULQmultialphabets}.
\end{proof}

%Before proving Theorems \ref{the:asymp2stageproc}, let us first show that the probability of making incorrect decisions by any test procedure can be controlled if the stopping rule is appropriately formed through the posterior distributions.
%\begin{lemma} \label{lem:wrgdecprob}
%	Suppose that all prior probabilities $\{\pi_0,\dots, \pi_{M-1}\}$ are positive, and denote by $\pi_{m,n}$ the posterior probabilities at the fusion center at time step $n.$  For a general decentralized procedure $\delta(c)$ which has $N$ as its stopping time, assume that there exists a function $\alpha(c) \in (0, 1/2)$ such that $\alpha(c)\to 0$ when $c\to 0$, and the posterior probability $\pi_{m,N}$ of the test $\delta(c)$ is always larger than $1-\alpha(c)$ when  it makes a decision $D=m,$ i.e.,
%  \begin{equation}\label{equ:TimeDpimNsmlthan1metacpb0}
%	\textbf{P}_{m'}\{D=m, \pi_{m,N}<1-\alpha(c)\}=0, \qquad 0\le m,m'\le M-1.
%  \end{equation}
%Then as $c\to 0$,
%  \begin{displaymath}
%	\textbf{P}_m\{D\neq m\}=O(\alpha(c)),\qquad m=0,1,\dots, M-1.
%  \end{displaymath}
%\end{lemma}
%The detailed proof will be omitted here because it can be done by a standard argument of change of measure and has only slight difference to that of Lemma 1 of Kiefer and Sacks \cite{ks}.

\section{Proof of Theorem  \ref{the:asymp2stageproc} }  \label{app:theproof}

At each stage of our proposed two-stage test $\delta(c),$ since the local sensor uses stationary (though possibly randomized) quantizers, the sensor messages $U_{n}$'s are i.i.d. and the fusion center essentially faces the classical centralized sequential hypothesis testing problems. Thus Theorem \ref{the:asymp2stageproc} can be proved by standard arguments and by conditioning on the preliminary decision $D_0$ of the two-stage test $\delta(c).$ In the following we will focus on the proof of (\ref{equ:samplesize2stageproc}) to highlight the associated technical mathematical problems that need special attention. Denote by $N_0$ and $N_1$ the total time steps of the first and second stages of the two-stage test $\delta(c),$ respectively,
then the total time step $N$ taken by $\delta(c)$ satisfies
\begin{eqnarray*}
\textbf{E}_m\left\{ N \right\} &=&  \textbf{E}_m\left\{ N_0 \right\}+\textbf{E}_m\left\{ N_1 \right\} \nonumber \\
		&=& \textbf{E}_m\left\{ N_0 \right\}+\textbf{E}_{m}\left\{ N_1|D_0=m \right\}\textbf{P}_m\left\{ D_0=m \right\}\\
		&&+\textbf{E}_m\left\{ N_1 1\{D_0\neq m\} \right\}. \label{equ:EmN-decmp}
\end{eqnarray*}
By standard arguments for the classical centralized sequential multihypothesis testing problems, the stopping boundary of $1-u(c)$ at the first stage guarantees that $\textbf{P}_m\{D_0=m\} = 1-O(u(c))$ and $\textbf{E}_m\{N_0\} = O(|\log u(c)|).$ Since $u(c) \rightarrow 0$ satisfies $|\log u(c)| / |\log c| \rightarrow 0,$ e.g., $u(c) = 1 /|\log c|,$ we have $\textbf{P}_m\{D_0=m\} = 1-o(1)$ and
$\textbf{E}_m\{N_0\} = o(|\log c|).$ Hence, equation (\ref{equ:samplesize2stageproc}) holds if we can further show that

\begin{equation}\label{eqnJuly8n03} 
%\textbf{P}_m\{D_0=m\} &=& 1-o(1) \label{eqnJuly8n02} \\
%\textbf{E}_m\{N_0\} &=& o(|\log c|) \label{eqnJuly8n01} \\
\textbf{E}_m\{N_1|D_0=m\} = (1+o(1))|\log c|/I(m; \bar{\phi}_m)    
\end{equation}

and

\begin{equation} \label{eqnJuly8n04}
\textbf{E}_m\{N_1 1\{D_0\neq m\}\} = o(|\log c|). 
\end{equation}
To prove (\ref{eqnJuly8n03}) and (\ref{eqnJuly8n04}), note that at time $n$ of the second stage of our proposed two-stage test $\delta(c),$ the log-likelihood ratio statistic of the latest sensor message at the fusion center is
 \[
 \Delta Z_{n}(m,m'; \phi^{j(n)})=\log \frac{f_{m}(U_n; \phi^{j(n)})}{f_{m'}(U_n; \phi^{j(n)})},
\]
where $\phi^{j(n)}$ is the deterministic quantizer selected through the randomization at time step $n$ and $U_n=\phi^{j(n)}(X_n)$ is the quantized sensor message. Hence, for our proposed two-stage test,  the log-likelihood ratio statistic of all available sensor messages up to time $n$ is
\begin{equation}\label{equ:logsum}
	Z_n(m,m'; \bar{\phi})=\sum_{i=1}^{n}\Delta Z_i(m,m'; \phi^{j(i)}).
\end{equation}
Furthermore, since $\bar{\phi}$ is assumed to be a randomization of a finite number of deterministic quantizers, our implementation of randomized quantizers implies that $\{\Delta Z_{n}(m,m'; \phi^{j(i)}), n=1, 2,\dots\}$ is a sequence of i.i.d. random variables with mean $I(m,m';{\bar{\phi}})$ in (\ref{equ:inforndm}) and finite variance.

To prove (\ref{eqnJuly8n03}), it is sufficient to show that
\[
		\textbf{E}_m\left\{ N_1|D_0=m, \pi_{\cdot, N_0} \right\} = (1+o(1))|\log c|/I(m; \bar{\phi}_m)
\]
where  $\pi_{\cdot, N_0}=(\pi_{0, N_0},\dots,\pi_{M-1, N_0})$ denotes the posterior distributions at time $N_0$ and the $o(1)$ term is uniform on the event $\{D_0=m\}$ for any possible $\pi_{\cdot, N_0}.$ This relation itself follows at once from the fact that $Z_{n}(m,m';\bar{\phi})\}$ is the sum of i.i.d. random variables with mean $I(m,m';{\bar{\phi}})$ in (\ref{equ:inforndm}) and finite variance, but we need some extra work to prove the uniformness of the $o(1)$ term. For that purpose, given the state $m,$ let $B_{c} =|\log c/(1-c)|+|\log (1-u(c))|$ and consider the following stopping time:
\begin{equation}\label{equ:July16eq01}
T(B_{c}; \bar{\phi}_m)=\inf\{n: \min_{m':m'\neq m}Z_n(m,m'; \bar{\phi}_{m} )\ge B_{c} \}
\end{equation}
where  $Z_n(m,m'; \bar{\phi}_{m})$ is the log-likelihood ratio in (\ref{equ:logsum}) except that the quantizer $\bar{\phi}$ is now replaced by $\bar{\phi}_m$ since we condition on $D_0 = m.$ Clearly, under the conditional distribution $\textbf{P}_m\{\cdot |D_0=m, \pi_{\cdot,N_0}\},$ the stopping time $N_1$ is dominated by $T(B_{c}; \bar{\phi}_m)$, which  does not depend on $\pi_{\cdot,N_0}.$ By  the law of large numbers, we have $\textbf{E}_m\{T(B_{c}; \bar{\phi}_m)\}/B_{c} \to 1/I(m; \bar{\phi}_m),$ also see Theorem 5.1 of Baum and Veeravalli \cite{bv}. Thus we can have a $o(1)$ term with the $\le$ part of relation  (\ref{eqnJuly8n03}) due to the above arguments and the fact that $\log u(c)=o(|\log c|).$  The $\ge$ part of the relation can be proved similarly and thus relation  (\ref{eqnJuly8n03}) holds.

%	Consider a procedure which has a prior distribution set to $\pi_{\cdot,N_0}$ (assumed to be known), implements the stationary quantizer $\bar{\phi}_m$, and has a stopping time $\tilde{N}_1$ defined as the first time one of the posterior probabilities becomes greater than $1-c$. By renewal arguments, under the conditional distribution $\textbf{P}_m\{\cdot |D_0=m, \pi_{\cdot,N_0}\}$, $N_1$ has the same distribution as $\tilde{N}_1$ under $\textbf{P}_m$. Set $B'=|\log c/(1-c)|-|\log (1-u(c))|$ and $B=|\log c/(1-c)|+|\log (1-u(c))|$, then
%	\begin{displaymath}
%		T'_m(B'; \bar{\phi}_m)\le \tilde{N}_1\le T_m(B; \bar{\phi}_m).
%	\end{displaymath}
%	Note that both $T'_m(B'; \bar{\phi}_m)$ and $T_m(B; \bar{\phi}_m)$ does not depend on $\pi_{\cdot,N_0}$. By (\ref{equ:lim-T}) and the fact $\log u(c)=o(|\log c|)$,
%	\begin{displaymath}
%		\textbf{E}_m\left.\left\{ \tilde{N}_1 \right\}\right|_{\tilde{\pi}=\pi_{,N_0}}=(1+o(1))|\log c|/I(m; \bar{\phi}_m),
%	\end{displaymath}
%	and the $o(1)$ term does not depend on $\pi_{\cdot,N_0}$ as long as $D_0=m$. So the statement for $\textbf{E}_m\left\{ N_1|D_0=m \right\}$ is proved. Now relation  (\ref{eqnJuly8n01}) follows from (\ref{equ:lim-T}) and the fact that $N_0\le T_{m}(B; \phi^0)$ with $B=|\log u(c)/(1-u(c))|+(M-1)|\log \pi_m|$.

%\begin{lemma}\label{lem:postRegularEndstage1}
%	Relation (\ref{eqnJuly8n04}) holds as $c\to 0$.
%\end{lemma}
%\begin{proof}[Proof of Lemma \ref{lem:postRegularEndstage1}]

The proof of (\ref{eqnJuly8n04}) involves more technical details. It suffices to show that $\textbf{E}_m\{N_1 1\{D_0=m'\}\}=o(|\log c|)$ for each $m'\neq m$. Now when $\{D_0 = m'\},$ our proposed two-stage procedure $\delta(c)$ uses the stationary (likely randomized) quantizer $\bar{\phi}_{m'}$ at the second stage. Hence, we can define $Z_n(m,m'; \bar{\phi}_{m'})$ as in (\ref{equ:logsum}) except that we now use the stationary quantizer $\bar{\phi}_{m'}.$ Likewise, define  $T(B_{c}^{*}; \bar{\phi}_{m'})$ as in (\ref{equ:July16eq01}) with $B_{c}^{*} =|\log c/(1-c)|+|\log \tilde{\pi}_m|,$ where $\tilde{\pi}_m = \pi_{m,N_0}$ is the posterior probability of the $m$th hypothesis at time $N^0.$  Then
\begin{eqnarray*}
		&& \textbf{E}_m\left\{ N_1 1\{D_0=m'\} \right\}\\
		&\le& \textbf{E}_m\left\{T(B_{c}^{*}; \bar{\phi}_{m'}) 1\{D_0=m'\} \right\}  \\
        &=& \textbf{E}_m\left\{ (1+o(1)) B_{c}^{*} 1\{D_0=m'\} /{I(m; \bar{\phi}_{m'})}  \right\} \\
		&\le& (1+o(1))/{I(m; \bar{\phi}_{m'})} \times \\ 
		&&\textbf{E}_m\left\{ (|\log c/(1-c)|+|\log \tilde{\pi}_m|) 1\{D_0=m'\} \right\} \\
		&=& O(|\log c|)\textbf{P}_m\left\{ D_0=m' \right\}+\\
		&&O(1)\textbf{E}_m\left\{ |\log \tilde{\pi}_m| 1\{D_0=m'\} \right\}\\
		&=& o(|\log c|)+ O(1)\textbf{E}_m\left\{ |\log \tilde{\pi}_m  | 1\{D_0=m'\} \right\}.
	\end{eqnarray*}
Thus, to prove (\ref{eqnJuly8n04}), it remains to show that $\textbf{E}_m\left\{ |\log \tilde{\pi}_m | 1\{D_0=m'\} \right\}=o(|\log c|)$ with $\tilde{\pi}_m = \pi_{m,N_0}.$ Below we will prove a stronger statement that \[\textbf{E}_m\left\{ |\log \pi_{m,N_0}| 1\{D_0 \ne m\} \right\}=o(1).\]

By assumption, at time $N_0,$ if $D_0=m'$ then $\pi_{m', N_0} \ge 1 -u(c) > 1/2.$ So $\pi_{m, N_0} < u(c) < 1/2$ and for all $L > 0,$
	\begin{eqnarray*}
		&& \textbf{P}_m\left\{ |\log \pi_{m,N_0}| >L, D_0\neq m\right\}\\
		&\le& \textbf{P}_m\left\{ \log \frac{1-\pi_{m,N_0}}{\pi_{m,N_0}}>L-\log 2, D_0\neq m\right\}\\
		&\le& \textbf{P}_m\left\{\sup_{n\ge 1} \log\frac{1-\pi_{m,n}}{\pi_{m,n}}>L-\log 2\right\}\\
		&\le& \textbf{P}_m\left\{ \sup_{n\ge 1}\sum_{m':m'\neq m}\frac{\pi_{m'}}{\pi_m}\exp\{-Z_n(m,m'; \phi^0)\}\right.\\
		&& \left.\phantom{\sum_{m':m'\neq m}\frac{\pi_{m'}}{\pi_m}}>e^L/2\right\}\\
		&\le& \textbf{P}_m\left\{ \min_{m':m'\neq m} \inf_{n\ge 1} Z_{n}(m,m'; \phi^0)\phantom{\frac{2(M-1)}{\pi_m}}\right.\\
		&&\left.<-L+\log\frac{2(M-1)}{\pi_m} \right\}.
%		&\le& B_2\rho^L,
	\end{eqnarray*}
Assume for a moment that the minimum $Z^{*} = \min_{m':m'\neq m} \inf_{n\ge 0} Z_n(m,m';\phi^0)$
is exponentially bounded in the sense that there exists a constant $C_1>0$ and $0<\rho<1$  such that for any $L>0$,
\begin{eqnarray} \label{equ:July16eq02}
\textbf{P}_m\left\{Z^{*} \le -L\right\}\le C_1 \rho^{L}.
\end{eqnarray}
Then we have
\[
\textbf{P}_m\left\{ |\log \pi_{m,N_0}| >L, D_0\neq m\right\} \le C_2 \rho^{L}
\]
with the constant $C_2 = C_1 \exp(-\log \rho \log\frac{2(M-1)}{\pi_m}).$ Consequently,
	\begin{eqnarray*}
		&&\textbf{E}_m\left\{ |\log \pi_{m,N_0}| 1\{D_0\neq m\} \right\}\\
		&=& \textbf{E}_m\left\{ |\log \pi_{m,N_0}| 1\{D_0\neq m, |\log\pi_{m,N_0}|\ge |\log u(c)|\} \right\} \\
		&\le& C_2\int_{|\log u(c)|}^{\infty}\rho^L dL\\
		&=& \frac{C_2}{|\log \rho|}\rho^{|\log u(c)|}
	\end{eqnarray*}
which goes to $0$ as $c\to 0.$ Thus (\ref{eqnJuly8n04}) is proved and the theorem holds.

It remains to prove (\ref{equ:July16eq02}). Since the log-likelihood ratio statistic $Z_{n}(m,m'; \bar \phi)$ in (\ref{equ:logsum}) is the sum of i.i.d. random variables with positive mean and finite variance under ${\bf P}_m,$ the minimum
\[
Z^{*}_{m'} = \inf_{n\ge 0} Z_n(m,m'; \bar \phi)
\]
is a well-defined (non-positive valued) random variable under ${\bf P}_m.$ Moreover,
	\begin{eqnarray*}
		\textbf{P}_m\left\{Z^{*} \le - L\right\} &\le& \sum_{m':m'\neq m}\textbf{P}_m\left\{Z^{*}_{m'} \le -L\right\}.
	\end{eqnarray*}
Thus, to prove (\ref{equ:July16eq02}), it suffices to show that $Z^{*}_{m'}$ is exponentially bounded for each $m'.$ Define a stopping time $\tau_{-} = \inf\{n: Z_{n} (m,m'; \bar \phi) < 0\}$ and let $Y_1, Y_2, \ldots$ be i.i.d. random variables, where $Y_1 = Z_{\tau_{-}}(m,m'; \bar \phi)$ conditional on the event $\tau_{-} < \infty.$ Then it is well-known that $Z^{*}_{m'}$ has the same distribution as $\sum_{i=1}^{\tilde{N}} Y_{i},$ where $\tilde{N}$ is a geometric random variable independent of $Y_{i}$'s such that $P(\tilde{N}=n) = p(1-p)^{n}$ with $p= \textbf{P}_m\{Z^{*}_{m'} =0\}>0,$ see Klass \cite{klass}, or Lemma 11.3 and Remark 11.3 of Gut \cite{gut}. Now in our case, since $U_{n}$ is discrete and $\bar \phi$ is randomization of a finite number of deterministic quantizer, $\Delta Z_{n}(m,m'; \bar \phi)$ has a lower bound, say $-C$ for some $C > 0.$ Thus $Y_1 = Z_{\tau_{-}}(m,m'; \bar \phi)$ also has a lower bound $-C.$ So
\begin{eqnarray*}
\textbf{P}_m\left\{Z^{*}_{m'} \le -L\right\} &=& P(\sum_{i=1}^{ \tilde{N} } Y_{i} \le - L) \\
 &=& P( \tilde{N} \ge L / C ) \\
 &=& (1-p)^{[L/C]}
\end{eqnarray*}
where the last relation uses the fact that $\tilde{N}$ is geometrically distributed. Hence $Z^{*}_{m'}$ is exponentially bounded and the theorem holds. It is also instructive to compare $Z^{*}_{m'}$ with Brownian motion. Let $B(t)$ denote standard Brownian motion with mean zero and variance parameter $1.$ Then for all positive $L, \mu$ and $\sigma,$
\[
{\bf P}( \inf_{t \ge 0}\{\sigma B(t) + \mu t \} \le - L) = \exp(-2 \mu \sigma^{-2} L).
\]

\section{Proof of Theorem \ref{the:asympoptdeltaIc}}\label{app:lwbds}

%Now we give a proof to Theorem \ref{the:asympoptdeltaIc}, which provides asymptotic lower bounds for the expected sample sizes of any decentralized test. The basic idea of our proof is to use the optional stopping theorem for martingales and Wald's likelihood ratio identity.
%\begin{proof}[Proof of Theorem \ref{the:asympoptdeltaIc}]

To prove Theorem \ref{the:asympoptdeltaIc}, the main idea  is to construct a martingale based on log-likelihood ratios and then apply the optional stopping theorem and Wald's inequalities. Since Theorem \ref{the:asympoptdeltaIc} deals with general decentralized sequential tests that may or may not implement randomized quantizers as we proposed for the two-stage tests,
denote by $\tilde{\phi}_n$ the quantizer used at time step $n$ to the best knowledge of the fusion center. For example, when a randomized quantizer $\bar{\phi}=\sum p^j\phi^j$ is implemented and the fusion center knows that the deterministic quantizer $\phi^j$ is picked at time step $n$, then $\tilde{\phi}_n=\phi^j.$ Meanwhile, if the randomization is done at the local sensor and the fusion center has no access about which deterministic quantizer is picked, then $\tilde{\phi}_n=\bar{\phi}.$

Let $U_n$ be the sensor message at time step $n$ and let $q(\tilde{\phi}_n)$ be the distribution vector of $\tilde{\phi}_{n}.$ For $n=1,2,\dots$, define $\mathcal{F}_{n-1}$ as the $\sigma$-algebra generated by $U_1,\dots, U_{n-1}$ and $q(\tilde{\phi}_1), \dots, q(\tilde{\phi}_{n})$.  In other words, $\mathcal{F}_{n-1}$ is all the past information available to the fusion center before the $n$th time step. Then at time step $n,$ the  log-likelihood ratio of state $m$ with respect to state $m'$ is
%Let $Z_n$ be the log likelihood ratio of state $m$ with respect to state $m'$, where $m\neq m'$. Rigorously, let $\textbf{P}_{m, n}$ and $\textbf{P}_{m', n}$ be the probability measures obtained by restricting $\textbf{P}_{m}$ and $\textbf{P}_{m'}$ to the $\sigma$-algebra $\mathcal{F}_n$, then $Z_n=\log\frac{d\textbf{P}_{m, n}}{d\textbf{P}_{m', n}}.$
$Z_n=\sum_{i=1}^{n} \Delta Z_i$, where
\[
\Delta Z_i=\log \frac{f_m(U_i|\mathcal{F}_{i-1})}{f_{m'}(U_i|\mathcal{F}_{i-1})}
\]
and $f_m(\cdot|\mathcal{F}_{i-1})$ is the conditional probability mass function induced on $U_{i}$ under $\textbf{P}_m.$ Since $U_i$ depends on $\mathcal{F}_{i-1}$ only through $\tilde{\phi}_i,$ $f_m(\cdot|\mathcal{F}_{i-1})$ is simply $f_{m}(\cdot; \tilde{\phi})$ in (\ref{equ:inforndma}), and thus $\textbf{E}_m\{\Delta Z_i|\mathcal{F}_{i-1}\}=J(m,m'; q(\tilde{\phi}_i))$ in (\ref{equ:Immpmq}). Therefore,
\begin{equation*}
	M_n =\sum_{i=1}^{n} \left[ \Delta Z_{i}-J(m,m'; q(\tilde{\phi}_{i})) \right]  = Z_{n} - \sum_{i=1}^{n} J(m,m'; q(\tilde{\phi}_{i}))
\end{equation*}
forms a martingale under $\textbf{P}_m$ with respect to $\{\mathcal{F}_n\}$.  Applying the optional stopping theorem to the martingale $\{M_n; \mathcal{F}_n\},$ for the stopping time $N$ of a decentralized test $\delta(c),$ we have ${\bf E}_{m}(M_{N}) = 0,$ or equivalently,
\begin{equation}\label{equ:Mn}
	\textbf{E}_m\left\{ Z_N \right\}  = \textbf{E}_m\left\{ \sum_{i=1}^{N} J(m,m'; q(\tilde{\phi}_i)) \right\}.
\end{equation}

Now let us go back to the proof of Theorem \ref{the:asympoptdeltaIc}. Obviously, for a decentralized test $\delta(c)$  satisfying the error probability assumption in Theorem \ref{the:asympoptdeltaIc}, if the sample size $N$ satisfies $\textbf{E}_m\left\{ N \right\} = \infty,$ then Theorem \ref{the:asympoptdeltaIc} holds. Thus we only need to consider the case when $\textbf{E}_m(N) < \infty.$ To derive the asymptotic lower bound on $\textbf{E}_m(N),$ we construct a new test $\delta'(c)$ that accepts $\textbf{H}_m$ if the final decision of $\delta(c)$ is $D=m$ but accepts $\textbf{H}_{m'}$ (for a given $m' \ne m$) if $D \ne m.$ Then this new test $\delta'(c)$ is a well-defined sequential test in the problem of testing a simple hypothesis $\textbf{H}_{m}$ against a simple alternative $\textbf{H}_{m'}.$ Moreover, the assumption of Theorem \ref{the:asympoptdeltaIc} guarantees that both type I and type II errors of $\delta'(c)$ are less than $\alpha_{c} = Ac|\log c|,$ where $A>0$ is a constant. Hence, $Z_N$ represents the log-likelihood ratio of the test  $\delta'(c)$ when stopped and
%Note that $Z_N$ represent the log-likelihood ratio of the distribution $\textbf{P}_m$ to $\textbf{P}_{m'}$ with all the information gathered at the fusion center when $\delta(c)$ is stopped.
%	Since the decision of $\delta(c)$ is required to be measurable with respect to $\mathcal{F}_N$, and $\alpha\to 0$ as $c\to 0$,
by Wald's inequalities (also see Theorem 2.39 of Siegmund \cite{sie}),
  \begin{eqnarray*}
	  \textbf{E}_m\left\{ Z_N \right\} &\ge& (1-\alpha_{c})\log(\frac{1-\alpha_{c}}{\alpha_{c}})+\alpha_{c}\log(\frac{\alpha_{c}}{1-\alpha_{c}}) \\
&\ge& (1-\alpha_{c}) |\log \alpha_{c}| - \log 2 \\
&=& |\log c|- \log|\log c|+O(1)
  \end{eqnarray*}
as $c \rightarrow 0,$ where the $O(1)$ term depends only on $A$.
Here the second inequality follows from the facts that $\alpha \log (1-\alpha)^{-1}$ is nonnegative and that $(1-\alpha) \log (1-\alpha) + \alpha \log \alpha$ attains minimum value $-\log 2$ when $\alpha = \frac12.$ % The third equation follows from the fact that $\alpha=Ac|\log c|.$
By (\ref{equ:Mn}), we have
\begin{equation}\label{eqnn031}
\textbf{E}_m\left\{ \sum_{i=1}^{N} J(m,m'; q(\tilde{\phi}_{i})) \right\}\ge |\log c|-\log|\log c|+O(1).
 \end{equation}

Now we claim that the left-hand side of (\ref{eqnn031}) can be rewritten as $J^*(m,m'; \mu_m) \textbf{E}_m\left\{ N \right\}$
for a suitably chosen probability measure $\mu_m$ on $\bar{Q},$ where $J^*(m,m' ; \mu_m)$ is defined as in (\ref{equ:ImmprimMu}).
Then the theorem follows at once from this claim, relation (\ref{eqnn031}), and Lemma \ref{lem:ImmuSupIsIm}.
It remains to prove this claim. To do so, define
$\mu_m$ as a convex combination of a sequence of probability measures $\{\mu_{m,n,i}: i\le n\}$ as follows.
 \[\mu_m=\sum_{n=1}^{\infty}\sum_{i=1}^{n}\frac{\textbf{P}_m\{N=n\}}{\textbf{E}_m\{N\}}\mu_{m,n,i}.\]
 Then let $\mu_{m,n,i}$ be the distribution of $q(\tilde{\phi}_i)$ under $\textbf{P}_m$ and conditioned on the event $N=n$. In other words, for any Borel set $A\subset \bar Q$, $\mu_{m,n,i}(A)=\textbf{P}_m\{q(\tilde{\phi}_i)\in A|N=n\}$.  We have
 \begin{eqnarray*}
	 &&\textbf{E}_m\left\{ N \right\}J^*(m, m' ; \mu_m)\\
	 &=& \textbf{E}_m\left\{ N \right\} \sum_{n=1}^{\infty}\sum_{i=1}^{n} \frac{\textbf{P}_m\left\{ N=n \right\}}{\textbf{E}_m\left\{ N \right\}} J^*(m,m'; \mu_{m,n,i})\\
	 &=& \sum_{n=1}^{\infty}\textbf{P}_m\left\{ N=n \right\}\sum_{i=1}^{n}\textbf{E}_m\left\{\left. J(m,m'; q(\tilde{\phi}_i))\right|N=n \right\}\\
	 &=& \sum_{n=1}^{\infty}\sum_{i=1}^{n}\textbf{E}_m\left\{ J(m,m'; q(\tilde{\phi}_i)) 1\{N=n\} \right\}\\
	 &=& \textbf{E}_m\left\{ \sum_{i=1}^{N} J(m,m'; q(\tilde{\phi}_i)) \right\}.
 \end{eqnarray*}

% if have a single appendix:
%\appendix[Proof of the Zonklar Equations]
% or
%\appendix  % for no appendix heading
% do not use \section anymore after \appendix, only \section*
% is possibly needed

% use appendices with more than one appendix
% then use \section to start each appendix
% you must declare a \section before using any
% \subsection or using \label (\appendices by itself
% starts a section numbered zero.)
%

%\appendix
%Appendix one text goes here.

% you can choose not to have a title for an appendix
% if you want by leaving the argument blank

%\section{}
%Appendix two text goes here.

% use section* for acknowledgement

% Can use something like this to put references on a page
% by themselves when using endfloat and the captionsoff option.
\ifCLASSOPTIONcaptionsoff
  \newpage
\fi

% trigger a \newpage just before the given reference
% number - used to balance the columns on the last page
% adjust value as needed - may need to be readjusted if
% the document is modified later
%\IEEEtriggeratref{8}
% The "triggered" command can be changed if desired:
%\IEEEtriggercmd{\enlargethispage{-5in}}

% references section

% can use a bibliography generated by BibTeX as a .bbl file
% BibTeX documentation can be easily obtained at:
% http://www.ctan.org/tex-archive/biblio/bibtex/contrib/doc/
% The IEEEtran BibTeX style support page is at:
% http://www.michaelshell.org/tex/ieeetran/bibtex/
%\bibliographystyle{IEEEtran}
% argument is your BibTeX string definitions and bibliography database(s)
%\bibliography{IEEEabrv,../bib/paper}

\begin{thebibliography}{1}
%	\bibitem{ba}
% P. Baldi, ``An introduction to the theory of large deviations'', \textit{Milan Journal of Mathematics}, vol. 56, no. 1, pp. 65-79, 1986.
\bibitem{bv}
  C. W. Baum, V. V. Veeravalli, ``A sequential procedure for multihypothesis testing'', \textit{IEEE Trans. Inf. Theory}, vol. 40, pp. 1994-2007, Nov. 1994.
\bibitem{blum97}
R. S. Blum, S. A. Kassam, and H. V. Poor, ``Distributed detection
with muliple sensors: part II---advanced topics," \emph{Proceedings
of the IEEE}, vol. 85, no. 1, pp. 64-79, Jan. 1997.
\bibitem{che}
H. Chernoff, ``Sequential design of experiment,'' \textit{Ann. Math. Statist.}, vol. 30, pp. 755-770, Sep. 1959.

\bibitem{che72}
H. Chernoff, \textit{Sequential Analysis and Optimal Design.} Philadelphia, PA: SIAM, 1972.
\bibitem{dtv}
V. P. Dragalin, A. G. Tartakovsky, V. V. Veeravalli, ``Sequential Probability Ratio Tests---Part I: Asymptotic Optimality'', \textit{IEEE Trans. Inf. Theory}, vol. 45, pp. 2448-2461, Nov. 1999.

\bibitem{dtv2}
V. P. Dragalin, A. G. Tartakovsky, V. V. Veeravalli, ``Sequential Probability Ratio Tests---Part II: Accurate Asymptotic Expansions for the Expected Sample Size'', \textit{IEEE Trans. Inf. Theory}, vol. 46, pp. 1366-1383, Jul. 2000.

\bibitem{fu}
  K. S. Fu, \textit{Sequential Methods in Pattern Recognition and Learning,} New York: Academic, 1968.

  \bibitem{gut}
	  A. Gut, \textit{Stopped Random Walks: Limit Theorems and Applications.} New York: Springer-Verlag, 1988.
\bibitem{hor}
L. Hormander, \textit{Notions Of Convexity}, Chapter II.1. Boston: Birkhauser, 1994.

\bibitem{ks}
J. Kiefer and J. Sacks, ``Asymptotically optimal sequential inference and design,'' \textit{Ann. Math. Statist.}, vol. 34, pp. 705-750, Sep. 1963.

\bibitem{klass} M. J. Klass, ``On the maximum of a random walk with small negative drift,'' \textit{Ann. Probab.,} vol. 11, No. 3, pp. 491-505, 1983.


\bibitem{leh}
  E. L. Lehmann, \textit{Testing Statistical Hypotheses.} New York: Wiley, 1959.

\bibitem{lwhs} D. Li, K. D. Wong, Y. H. Hu, A. M. Sayeed, ``Detection, classification and tracking of targets in distributed sensor networks'', \textit{IEEE Signal Processing Magazine,} vol. 19, pp 17-29, Mar. 2002.

\bibitem{lord} G. Lorden, ``Nearly-optimal sequential tests for finitely many parameter values,'' \textit{Ann. Statist.,} vol. 5, No.1, pp. 1-21, 1977.

\bibitem{ms} M. B. Marcus and P. Swerling, ``Sequential detection in radar with multiple resolution elements,'' \textit{IRE Trans. Inform. Theory,} pp. 237-245, Apr. 1962.

\bibitem{mei05}
  Y. Mei, ``Information bounds and quickest change detection in decentralized decision systems,'' \textit{IEEE Trans. Inf. Theory, } vol. 51, pp. 2669-2681, Jul. 2005.
\bibitem{mei}
Y. Mei, ``Asymptotic optimality theory for decentralized sequential hypothesis testing in sensor networks'' \textit{IEEE Trans. Inf. Theory}, vol. 54, pp. 2072-2089, May. 2008.
\bibitem{nw} X. Nguyen, M. J. Wainwright, M. I. Jordan, ``On optimal quantization rules for some problems in sequential decentralized detection'' \textit{IEEE Trans. Inf. Theory}, vol. 54, pp. 3285-3295, Jul. 2008.

\bibitem{sosl} M. K. Simon, J. K. Omura, R. A. Scholtz, and B. K. Levitt, \textit{Spread Spectrum Communications,} vol. III. Rockville, MD: Comput. Sci., 1985.


\bibitem{stein}
C. Stein, ``A two-sample test for a linear hypothesis whose power is independent of the variance,'' \textit{Ann. Math. Statist.,} vol. 16, No. 3, pp. 243-258, 1945.

\bibitem{ts}
  R. R. Tenney, N. R. Sandell Jr., ``Detection with distributed sensors,'' \textit{IEEE Trans. Aerospace Elect. Syst., } vol. AES-17, pp.501-510, Jul. 1981.
\bibitem{tsito}
  I. I. Tsitovich, ``Sequential design of experiments for hypothesis testing'' \textit{Theory Prob. Appl.,} vol. 29, pp. 814-817, Jan. 1985.
\bibitem{tsi}
J. N. Tsitsiklis, ``Extremal properties of likelihood ratio quantizers'', \textit{IEEE Trans. Commun.}, vol. 41, pp. 550-558, Apr. 1993.
\bibitem{sie}
D. Siegmund, \textit{Sequential Analysis, Tests and Confidence Intervals,} New York: Springer-Verlag, 1985.
\bibitem{van}
H. L. Van Trees, \textit{Detection Estimation and Modulation Theory,} vol. I. New York: Wiley, 1968.
\bibitem{vee}
V. V. Veeravalli, ``Sequential decision fusion: theory and applications'', \textit{J. Franklin Inst.}, vol. 336, pp. 301-322, Feb. 1999.
\bibitem{vbp}
V. V. Veeravalli, T. Basar, and H. V. Poor, ``Decentralized sequential detection with a fusion center performing the sequential test,'' \textit{IEEE Trans. Inf. Theory}, vol. 39, pp. 433-442, Mar. 1993.
\bibitem{vv}
  R. Viswannathan, P. K. Varshney, ``Distributed detection with muliple sensors: part I- Fundamentals," \emph{Proceedings of the IEEE}, vol. 85, no. 1, pp. 54-63, 1997.
\bibitem{wald47} A. Wald, \emph{Sequential Analysis}. New York: Wiley, 1947.
\bibitem{ww}
  A. Wald and J. Wolfowitz, ``Optimal character of the sequential probability ratio test,'' \textit{Ann. Math. Statist.,} vol. 19, pp. 326-339, Sep. 1948.
\bibitem{yllz} F. Ye, H. Luo, S. Lu, L. Zhang, ``Statistical en-route filtering of injected false data in sensor networks'', \textit{IEEE Journal on Selected Areas in Communications}, vol 23, pp 839-850, Apr. 2005.

\end{thebibliography}
%
% <OR> manually copy in the resultant .bbl file
% set second argument of \begin to the number of references
% (used to reserve space for the reference number labels box)

\end{document}